\newtheorem{theorem}{Theorem}
\theoremstyle{plain}
\newtheorem{definition}{Definition}
\newtheorem{example}{Example}
\newtheorem{proposition}{Proposition}
\newtheorem{remark}{Remark}
\numberwithin{equation}{section}
\begin{document}
\title{Cantor--Kuratowski theorem in uniformizable spaces}
\author{Josiney A. Souza}
\author{Richard W. M. Alves}
\address{Universidade Estadual de Maring\'{a}, Brazil. \\
Email: joasouza3@uem.br}

\begin{abstract}
This manuscript extends the Cantor--Kuratowski intersection theorem from the
setting of metric spaces to the setting of uniformizable spaces. Complete
uniformizable spaces are revisited.
\end{abstract}

\keywords{Uniformizable space, totally boundedness, completeness, measure of
noncompactness.}
\maketitle
\subjclass{54E15; 54D20}

\section{Introduction}

The notion of uniformity was provided with the intention of transferring
methodologies from metric space to general topological spaces under absence
of metrization. Recently, the uniformizable spaces were described as
admissible spaces, which are topological spaces admitting admissible family
of open coverings (\cite{Richard}). Essentially, a covering uniformity is an
admissible family of open covering, while an admissible family of open
covering provides a base for a covering uniformity. The present paper
employs the admissible structure to extend the classical \emph{%
Cantor--Kuratowski intersection theorem} to the setting of uniformizable
spaces.

Let $\left( M,\mathrm{d}\right) $ be a metric space. The \emph{Cantor
intersection theorem} says that $M$ is a complete metric space if and only
if every decreasing sequence $F_{1}\supset F_{2}\supset \cdots \supset
F_{n}\supset \cdots $ of nonempty closed sets of $M$, with $\mathrm{diam}%
\left( F_{n}\right) \rightarrow 0$, has nonempty intersection. This theorem
was generalized by K. Kuratowski \cite{Kuratowski} by using the concept of
measure of noncompactness. The \emph{Kuratowski measure of noncompactness}
of $Y$ is defined as 
\begin{equation*}
\gamma \left( Y\right) =\inf \left\{ \delta >0:Y\text{ admits a finite cover
by sets of diameter at most }\delta \right\} .
\end{equation*}%
Then the measure of noncompactness associates numbers to sets in such a way
that every compact sets has measure zero while any noncompact set has
measure greater than zero. The \emph{Cantor--Kuratowski intersection theorem}
says that the metric space $M$ is complete if and only if every decreasing
sequence $\left( F_{n}\right) $ of nonempty bounded closed sets, with $%
\gamma \left( F_{n}\right) \rightarrow 0$, has nonempty compact
intersection. In the present paper, we use the admissible structure to
reproduce both the concepts of diameter and measure of noncompactness to
extend the intersection theorems to the general setting of uniformizable
spaces.

\section{\label{Section1}Admissible structure}

This section contains the basic definitions and results on admissible
spaces. We refer to \cite{Richard}, \cite{patrao2}, and \cite{So} for the
previous development of admissible spaces.

Let $X$ be a topological space and $\mathcal{U},\mathcal{V}$ coverings of $X$%
. We write $\mathcal{V}\leqslant \mathcal{U}$ if $\mathcal{V}$ is a
refinement of $\mathcal{U}$. One says $\mathcal{V}$ double-refines $\mathcal{%
U}$, or $\mathcal{V}$ is a \textbf{double-refinement} of $\mathcal{U}$,
written $\mathcal{V}\leqslant \frac{1}{2}\mathcal{U}$ or $2\mathcal{V}%
\leqslant \mathcal{U}$, if for every $V,V^{\prime }\in \mathcal{V}$, with $%
V\cap V^{\prime }\neq \emptyset $, there is $U\in \mathcal{U}$ such that $%
V\cup V^{\prime }\subset U$. We write $\mathcal{V}\leqslant \frac{1}{2^{2}}%
\mathcal{U}$ if there is a covering $\mathcal{W}$ of $X$ such that $\mathcal{%
V}\leqslant \frac{1}{2}\mathcal{W}$ and $\mathcal{W}\leqslant \frac{1}{2}%
\mathcal{U}$. Inductively, we write $\mathcal{V}\leqslant \frac{1}{2^{n}}%
\mathcal{U}$ if there is $\mathcal{W}$ with $\mathcal{V}\leqslant \frac{1}{2}%
\mathcal{W}$ and $\mathcal{W}\leqslant \frac{1}{2^{n-1}}\mathcal{U}$. In
certain sense, the notion of double-refinement in topological spaces
corresponds to the property of triangle inequality in metric spaces.

Now, for a covering $\mathcal{U}$ of $X$ and a subset $Y\subset X$, the 
\emph{star} of $Y$ with respect to $\mathcal{U}$ is the set 
\begin{equation*}
\mathrm{St}\left[ Y,\mathcal{U}\right] =\bigcup \left\{ U\in \mathcal{U}%
:Y\cap U\neq \emptyset \right\} \text{.}
\end{equation*}%
If $Y=\left\{ x\right\} $, we usually write $\mathrm{St}\left[ x,\mathcal{U}%
\right] $ rather than $\mathrm{St}\left[ \left\{ x\right\} ,\mathcal{U}%
\right] $. Then one has $\mathrm{St}\left[ Y,\mathcal{U}\right]
=\bigcup\limits_{x\in Y}\mathrm{St}\left[ x,\mathcal{U}\right] $ for every
subset $Y\subset X$.

\begin{definition}
\label{Admiss} A family $\mathcal{O}$ of open coverings of $X$ is said to be 
\textbf{admissible} if it satisfies the following properties:

\begin{enumerate}
\item For each $\mathcal{U}\in \mathcal{O}$, there is $\mathcal{V}\in 
\mathcal{O}$ such that $\mathcal{V}\leqslant \frac{1}{2}\mathcal{U}$;

\item If $Y\subset X$ is an open set and $K\subset Y$ is a compact subset of 
$X$ then there is an open covering $\mathcal{U}\in \mathcal{O}$ such that $%
\mathrm{St}\left[ K,\mathcal{U}\right] \subset Y$;

\item For any $\mathcal{U},\mathcal{V}\in \mathcal{O}$, there is $\mathcal{W}%
\in \mathcal{O}$ such that $\mathcal{W}\leqslant \mathcal{U}$ and $\mathcal{W%
}\leqslant \mathcal{V}$.
\end{enumerate}

The space $X$ is called \textbf{admissible} if it admits an admissible
family of open coverings.
\end{definition}

The properties 1 and 2 of Definition \ref{Admiss} guarantee that the stars $%
\mathrm{St}\left[ x,\mathcal{U}\right] $, for $x\in X$ and $\mathcal{U}\in 
\mathcal{O}$, form a basis for the topology of $X$, while the property 3
assures that the family $\mathcal{O}$ is directed by refinements.

It is easily seen that the properties 1 and 3 together are equivalent to the
following one:

\begin{enumerate}
\item[$\left( 1\right) ^{\prime }$] For any $\mathcal{U},\mathcal{V}\in 
\mathcal{O}$, there is $\mathcal{W}\in \mathcal{O}$ such that $\mathcal{W}%
\leqslant \frac{1}{2}\mathcal{U}$ and $\mathcal{W}\leqslant \frac{1}{2}%
\mathcal{V}$.
\end{enumerate}

Then Definition \ref{Admiss} can be simplified by requiring that the family $%
\mathcal{O}$ of open coverings of $X$ satisfies the following:

\begin{enumerate}
\item[$\left( 1\right) ^{\prime }$] For any $\mathcal{U},\mathcal{V}\in 
\mathcal{O}$, there is $\mathcal{W}\in \mathcal{O}$ such that $\mathcal{W}%
\leqslant \frac{1}{2}\mathcal{U}$ and $\mathcal{W}\leqslant \frac{1}{2}%
\mathcal{V}$.

\item[$\left( 2\right) ^{\prime }$] The stars $\mathrm{St}\left[ x,\mathcal{U%
}\right] $, for $x\in X$ and $\mathcal{U}\in \mathcal{O}$, form a basis for
the topology of $X$.
\end{enumerate}

\begin{example}
\label{Ex1}

\begin{enumerate}
\item If $X$ is a paracompact Hausdorff space, then the family $\mathcal{O}$
of all open coverings of $X$ is admissible.

\item If $X$ is a compact Hausdorff space, then the family $\mathcal{O}_{f}$
of all finite open coverings of $X$ is admissible.

\item If $\left( X,\mathrm{d}\right) $ is a metric space, then the family $%
\mathcal{O}_{\mathrm{d}}$ of the coverings $\mathcal{U}_{\varepsilon
}=\left\{ \mathrm{B}_{\mathrm{d}}\left( x,\varepsilon \right) :x\in
X\right\} $ by $\varepsilon $-balls, for $\varepsilon >0$, is admissible.
For every $\varepsilon >0$ and $Y\subset X$ we have $\mathcal{U}%
_{\varepsilon /2}\leqslant \frac{1}{2}\mathcal{U}_{\varepsilon }$ and 
\begin{equation*}
\mathrm{St}\left[ Y,\mathcal{U}_{\varepsilon /2}\right] \subset \mathrm{B}_{%
\mathrm{d}}\left( Y,\varepsilon \right) \subset \mathrm{St}\left[ Y,\mathcal{%
U}_{\varepsilon }\right] \text{.}
\end{equation*}

\item If $X$ is a uniformizable space then the covering uniformity of $X$ is
an admissible family of open coverings of $X$.
\end{enumerate}
\end{example}

\begin{remark}
\label{R1} Since the collection $\left\{ \mathrm{St}\left[ x,\mathcal{U}%
\right] :\mathcal{U}\in \mathcal{O}\right\} $ is a neighborhood base at $%
x\in X$, one has $\bigcap\limits_{\mathcal{U}\in \mathcal{O}}\mathrm{St}%
\left[ Y,\mathcal{U}\right] =\mathrm{cls}\left( Y\right) $ for every subset $%
Y\subset X$. If $X$ is Hausdorff, it follows that $\bigcap\limits_{\mathcal{U%
}\in \mathcal{O}}\mathrm{St}\left[ x,\mathcal{U}\right] =\left\{ x\right\} $
for every $x\in X$.
\end{remark}

Let $X$ be a fixed completely regular space endowed with an admissible
family of open coverings $\mathcal{O}$. Let $\mathcal{P}\left( \mathcal{O}%
\right) $ denote the power set of $\mathcal{O}$ and consider the partial
ordering relation on $\mathcal{P}\left( \mathcal{O}\right) $ given by
inverse inclusion: for $\mathcal{E}_{1},\mathcal{E}_{2}\in \mathcal{P}\left( 
\mathcal{O}\right) $%
\begin{equation*}
\mathcal{E}_{1}\prec \mathcal{E}_{2}\text{ if and only if }\mathcal{E}%
_{1}\supset \mathcal{E}_{2}.
\end{equation*}%
Concerning this relation, $\mathcal{O}$ is the smallest element in $\mathcal{%
P}\left( \mathcal{O}\right) $, or in other words, $\mathcal{O}$ is the lower
bound for $\mathcal{P}\left( \mathcal{O}\right) $. On the other hand, the
empty set $\emptyset $ is the upper bound for $\mathcal{P}\left( \mathcal{O}%
\right) $. Intuitively, $\mathcal{O}$ is the \textquotedblleft
zero\textquotedblright\ and $\emptyset $ is the \textquotedblleft
infinity\textquotedblright .

For each $\mathcal{E}\in \mathcal{P}\left( \mathcal{O}\right) $ and $n\in 
\mathbb{N}^{\ast }$ we define the set $n\mathcal{E}$ in $\mathcal{P}\left( 
\mathcal{O}\right) $ by 
\begin{equation*}
n\mathcal{E}=\left\{ \mathcal{U}\in \mathcal{O}:\text{there is }\mathcal{V}%
\in \mathcal{E}\text{ such that }\mathcal{V}\leqslant \tfrac{1}{2^{n}}%
\mathcal{U}\right\} .
\end{equation*}

This operation is order-preserving, that is, if $\mathcal{E}\prec \mathcal{D}
$ then $n\mathcal{E}\prec n\mathcal{D}$. In fact, if $\mathcal{U}\in n%
\mathcal{D}$ then there is $\mathcal{V}\in \mathcal{D}$ such that $\mathcal{V%
}\leqslant \tfrac{1}{2^{n}}\mathcal{U}$. As $\mathcal{D}\subset \mathcal{E}$%
, it follows that $\mathcal{U}\in n\mathcal{E}$, and therefore $n\mathcal{E}%
\prec n\mathcal{D}$. Note also that $n\mathcal{O}=\mathcal{O}$, for every $%
n\in \mathbb{N}^{\ast }$, since for each $\mathcal{U}\in \mathcal{O}$ there
is $\mathcal{V}\in \mathcal{O}$ such that $\mathcal{V}\leqslant \tfrac{1}{%
2^{n}}\mathcal{U}$, that is, $\mathcal{U}\in n\mathcal{O}$.

We now introduce the following notion of convergence in $\mathcal{P}\left( 
\mathcal{O}\right) $.

\begin{definition}
\label{Convergence}We say that a net $\left( \mathcal{E}_{\lambda }\right) $
in $\mathcal{P}\left( \mathcal{O}\right) $ \textbf{converges} to $\mathcal{O}
$, written $\mathcal{E}_{\lambda }\rightarrow \mathcal{O}$,\ if for every $%
\mathcal{U}\in \mathcal{O}$ there is a $\lambda _{0}$ such that $\mathcal{U}%
\in \mathcal{E}_{\lambda }$ whenever $\lambda \geq \lambda _{0}$.
\end{definition}

It is easily seen that $\mathcal{D}_{\lambda }\prec \mathcal{E}_{\lambda }$
and $\mathcal{E}_{\lambda }\rightarrow \mathcal{O}$ implies $\mathcal{D}%
_{\lambda }\rightarrow \mathcal{O}$.

\begin{proposition}
\label{P8} If $\mathcal{E}_{\lambda }\rightarrow \mathcal{O}$ then $n%
\mathcal{E}_{\lambda }\rightarrow \mathcal{O}$ for every $n\in \mathbb{N}%
^{\ast }$.
\end{proposition}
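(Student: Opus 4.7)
The plan is to unpack both definitions and reduce the statement to a single application of the hypothesis, after first producing a sufficiently fine refinement inside $\mathcal{O}$.

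First, I would fix an arbitrary $\mathcal{U}\in\mathcal{O}$ and seek a threshold $\lambda_{0}$ such that $\mathcal{U}\in n\mathcal{E}_{\lambda}$ for all $\lambda\geq\lambda_{0}$. By the definition of $n\mathcal{E}_{\lambda}$, this amounts to exhibiting, for each large $\lambda$, some $\mathcal{V}\in\mathcal{E}_{\lambda}$ with $\mathcal{V}\leqslant\tfrac{1}{2^{n}}\mathcal{U}$. The natural idea is to pick such a $\mathcal{V}$ once and for all, inside $\mathcal{O}$, and then let the convergence hypothesis place it eventually in $\mathcal{E}_{\lambda}$.

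The key preparatory step is therefore: given $\mathcal{U}\in\mathcal{O}$ and $n\in\mathbb{N}^{\ast}$, there exists $\mathcal{V}\in\mathcal{O}$ with $\mathcal{V}\leqslant\tfrac{1}{2^{n}}\mathcal{U}$. I would prove this by induction on $n$. The base case $n=1$ is exactly property~1 of Definition~\ref{Admiss}. For the inductive step, supposing $\mathcal{W}\in\mathcal{O}$ with $\mathcal{W}\leqslant\tfrac{1}{2^{n-1}}\mathcal{U}$, apply property~1 again to find $\mathcal{V}\in\mathcal{O}$ with $\mathcal{V}\leqslant\tfrac{1}{2}\mathcal{W}$; by the inductive definition of $\tfrac{1}{2^{n}}$-refinement given in the text, this yields $\mathcal{V}\leqslant\tfrac{1}{2^{n}}\mathcal{U}$.

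With this $\mathcal{V}$ fixed, the hypothesis $\mathcal{E}_{\lambda}\rightarrow\mathcal{O}$ applied to $\mathcal{V}\in\mathcal{O}$ supplies a $\lambda_{0}$ such that $\mathcal{V}\in\mathcal{E}_{\lambda}$ for all $\lambda\geq\lambda_{0}$. Since $\mathcal{V}\leqslant\tfrac{1}{2^{n}}\mathcal{U}$, the definition of $n\mathcal{E}_{\lambda}$ gives $\mathcal{U}\in n\mathcal{E}_{\lambda}$ for all $\lambda\geq\lambda_{0}$, which is exactly $n\mathcal{E}_{\lambda}\rightarrow\mathcal{O}$. There is no real obstacle here; the only step that requires care is the induction producing a $\tfrac{1}{2^{n}}$-refinement inside $\mathcal{O}$, and that is immediate from iterating property~1.
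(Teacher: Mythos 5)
Your proposal is correct and follows essentially the same route as the paper: both arguments fix $\mathcal{U}$, produce a single $\mathcal{V}\in\mathcal{O}$ with $\mathcal{V}\leqslant\tfrac{1}{2^{n}}\mathcal{U}$ (equivalently, $\mathcal{U}\in n\{\mathcal{V}\}$), and then use the convergence hypothesis to place $\mathcal{V}$ in $\mathcal{E}_{\lambda}$ eventually. The only cosmetic difference is that the paper phrases the last step via the order-preserving property of the operation $\mathcal{E}\mapsto n\mathcal{E}$, while you unpack the definition directly and spell out the induction producing the $\tfrac{1}{2^{n}}$-refinement, which the paper had already noted when observing $n\mathcal{O}=\mathcal{O}$.
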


\begin{proof}
For a given $\mathcal{U}\in \mathcal{O}$ there is $\mathcal{V}\in \mathcal{O}
$ such that $\mathcal{U}\in n\left\{ \mathcal{V}\right\} $. As $\mathcal{E}%
_{\lambda }\rightarrow \mathcal{O}$, there is $\lambda _{0}$ such that $%
\mathcal{V}\in \mathcal{E}_{\lambda }$ whenever $\lambda \geq \lambda _{0}$.
By order-preserving, $n\mathcal{E}_{\lambda }\prec n\left\{ \mathcal{V}%
\right\} \prec \left\{ \mathcal{U}\right\} $ whenever $\lambda \geq \lambda
_{0}$. Thus $n\mathcal{E}_{\lambda }\rightarrow \mathcal{O}$.
\end{proof}

We often use the auxiliary function $\rho :X\times X\rightarrow \mathcal{P}%
\left( \mathcal{O}\right) $ given by 
\begin{equation*}
\rho \left( x,y\right) =\left\{ \mathcal{U}\in \mathcal{O}:y\in \mathrm{St}%
\left[ x,\mathcal{U}\right] \right\} .
\end{equation*}%
Note that the value $\rho \left( x,y\right) $ is upwards hereditary, that
is, if $\mathcal{U}\leqslant \mathcal{V}$ with $\mathcal{U}\in \rho \left(
x,y\right) $ then $\mathcal{V}\in \rho \left( x,y\right) $. Thus $\rho
\left( x,y\right) \prec n\rho \left( x,y\right) $ for all $x,y\in X$ and $%
n\in \mathbb{N}^{\ast }$. We expand this property in the following.

\begin{proposition}
\label{P1}The binary operation $\rho :X\times X\rightarrow \mathcal{P}\left( 
\mathcal{O}\right) $ satisfies the following properties:

\begin{enumerate}
\item $\rho \left( x,y\right) =\rho \left( y,x\right) $ for all $x,y\in X$.

\item $\mathcal{O}\prec \rho \left( x,y\right) $, for all $x,y\in X$, and $%
\mathcal{O}=\rho \left( x,x\right) $.

\item If $X$ is Hausdorff, $\mathcal{O}=\rho \left( x,y\right) $ if and only
if $x=y$.

\item $\rho \left( x,y\right) \prec 1\left( \rho \left( x,z\right) \cap \rho
\left( z,y\right) \right) $ for all $x,y,z\in X$.

\item $\rho \left( x,y\right) \prec n\left( \rho \left( x,x_{1}\right) \cap
\rho \left( x_{1},x_{2}\right) \cap \ldots \cap \rho \left( x_{n},y\right)
\right) $ for all $x,y,x_{1},...,x_{n}\in X$.
\end{enumerate}
\end{proposition}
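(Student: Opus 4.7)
The plan is to handle items (1)--(5) in order. Parts (1)--(4) follow quickly by unpacking definitions, and (5) is the substantive case. For (1), $y \in \mathrm{St}[x,\mathcal{U}]$ is equivalent to the existence of $U \in \mathcal{U}$ containing both $x$ and $y$, a condition symmetric in $x,y$, so $\rho(x,y) = \rho(y,x)$. For (2), the inclusion $\rho(x,y) \subset \mathcal{O}$ is immediate from the definition, and $\mathcal{O} \subset \rho(x,x)$ holds because every cover has a member through $x$. For (3), $\rho(x,y) = \mathcal{O}$ forces $y \in \bigcap_{\mathcal{U} \in \mathcal{O}} \mathrm{St}[x,\mathcal{U}]$, which under the Hausdorff hypothesis equals $\{x\}$ by Remark \ref{R1}.

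For (4), pick $\mathcal{U} \in 1(\rho(x,z) \cap \rho(z,y))$ and choose $\mathcal{V}$ in the intersection with $\mathcal{V} \leqslant \tfrac{1}{2}\mathcal{U}$; select $V, V' \in \mathcal{V}$ with $\{x,z\} \subset V$ and $\{z,y\} \subset V'$. Since $z \in V \cap V'$, the double-refinement property yields $U \in \mathcal{U}$ with $V \cup V' \subset U$, so $\{x,y\} \subset U$ and $\mathcal{U} \in \rho(x,y)$.

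For (5), the natural approach is induction on $n$ with (4) as the base. Suppose $\mathcal{U} \in n(\rho(x,x_1) \cap \cdots \cap \rho(x_n,y))$; there is $\mathcal{V}$ in that intersection with $\mathcal{V} \leqslant \tfrac{1}{2^n}\mathcal{U}$, and unfolding this yields a tower $\mathcal{V} = \mathcal{W}^{(0)} \leqslant \tfrac{1}{2}\mathcal{W}^{(1)} \leqslant \cdots \leqslant \tfrac{1}{2}\mathcal{W}^{(n)} = \mathcal{U}$. Choose $V_0,\ldots,V_n \in \mathcal{V}$ with $\{x_i,x_{i+1}\} \subset V_i$ (setting $x_0 = x$, $x_{n+1} = y$). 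The overlap $x_{i+1} \in V_i \cap V_{i+1}$ produces $W^{(1)}_i \in \mathcal{W}^{(1)}$ with $\{x_i,x_{i+1},x_{i+2}\} \subset W^{(1)}_i$, so the chain length drops from $n+1$ to $\lceil (n+1)/2 \rceil$ in $\mathcal{W}^{(1)}$. Iterating this halving $n$ times leaves a single element of $\mathcal{W}^{(n)} = \mathcal{U}$ containing both $x$ and $y$.

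The main obstacle is that the intermediate covers $\mathcal{W}^{(k)}$ need not lie in $\mathcal{O}$, so an induction phrased purely within $\mathcal{O}$ cannot invoke itself on $\mathcal{W}^{(1)}$. The cleanest remedy is to promote the statement to a chain lemma over arbitrary covers: \emph{if a cover $\mathcal{V}$ chain-connects $x_0,\ldots,x_m$ via $V_0,\ldots,V_{m-1}$ and $\mathcal{V} \leqslant \tfrac{1}{2^n}\mathcal{U}$ with $m \leq 2^n$, then some $U \in \mathcal{U}$ contains $\{x_0,x_m\}$}. Induction on $n$ proceeds by the halving argument above; since $n+1 \leq 2^n$ for all $n \geq 1$, applying the lemma to $V_0,\ldots,V_n$ yields $\mathcal{U} \in \rho(x,y)$. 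The only delicate point is parity: when the chain length is odd, the leftover $V_{m-1}$ is absorbed by observing that $\mathcal{V} \leqslant \tfrac{1}{2}\mathcal{W}$ implies $\mathcal{V} \leqslant \mathcal{W}$ (take $V \cap V = V$ in the double-refinement), so some $W \in \mathcal{W}$ still contains $V_{m-1}$.
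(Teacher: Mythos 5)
Your proof is correct, and its core mechanism is the same as the paper's: unfold $\mathcal{V}\leqslant \tfrac{1}{2^{n}}\mathcal{U}$ into a tower of double-refinements and collapse the chain $x,x_{1},\dots ,x_{n},y$ one level at a time. The differences are in the bookkeeping and in what is made explicit. The paper proves item (5) directly and reads (4) off as the case $n=1$; at each level it absorbs only the endpoint links (so the $n$ available levels are exactly enough for the $n+1$ links), and it presents the successive absorptions informally (``by using successively the relations \dots'') without isolating a lemma. You go the other way: you prove (4) first and then promote the argument to a chain lemma over \emph{arbitrary} covers, halving the chain at each level so that chains of length up to $2^{n}$ collapse. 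This buys two genuine improvements in rigor and strength. First, stating the lemma for arbitrary covers correctly handles the fact that the witnesses $\mathcal{W}^{(k)}$ in the definition of $\leqslant \tfrac{1}{2^{n}}$ need not lie in $\mathcal{O}$ (the paper's proof silently asserts $\mathcal{U}_{1},\dots ,\mathcal{U}_{n-1}\in \mathcal{O}$, which the definition does not guarantee, though nothing in its argument actually uses that membership). Second, the bound $m\leqslant 2^{n}$ is quantitatively stronger than the $m\leqslant n+1$ needed here, at no extra cost. Your parity fix --- that $\mathcal{V}\leqslant \tfrac{1}{2}\mathcal{W}$ implies $\mathcal{V}\leqslant \mathcal{W}$, so a leftover odd link survives to the next level --- is also correct, and items (1)--(3) match the paper's treatment.
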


\begin{proof}
For proving item $\left( 1\right) $, it is enough to observe that $y\in 
\mathrm{St}\left[ x,\mathcal{U}\right] $ if and only if $x\in \mathrm{St}%
\left[ y,\mathcal{U}\right] $. For proving item $\left( 2\right) $, we have $%
\rho \left( x,y\right) \subset \mathcal{O}$ for all $x,y\in X$, hence $%
\mathcal{O}\prec \rho \left( x,y\right) $. Since $x\in \mathrm{St}\left[ x,%
\mathcal{U}\right] $ for all $\mathcal{U}\in \mathcal{O}$, we have $\rho
\left( x,x\right) =\mathcal{O}$. For item $\left( 3\right) $, suppose that $%
X $ is Hausdorff and $\rho \left( x,y\right) =\mathcal{O}$. Then $y\in 
\mathrm{St}\left[ x,\mathcal{U}\right] $ for every $\mathcal{U}\in \mathcal{O%
}$. Since the collection $\left\{ \mathrm{St}\left[ x,\mathcal{U}\right] :%
\mathcal{U}\in \mathcal{O}\right\} $ is a neighborhood base at $x$, it
follows that $y=x$, by Hausdorffness. Note that item $\left( 4\right) $ is a
particular case of item $\left( 5\right) $. Then we prove item $\left(
5\right) $. For indeed, if $n\left( \rho \left( x,x_{1}\right) \cap \rho
\left( x_{1},x_{2}\right) \cap \ldots \cap \rho \left( x_{n},y\right)
\right) =\emptyset $ then the inequality is obvious. Suppose that $n\left(
\rho \left( x,x_{1}\right) \cap \rho \left( x_{1},x_{2}\right) \cap \ldots
\cap \rho \left( x_{n},y\right) \right) \neq \emptyset $ and take $\mathcal{U%
}\in n\left( \rho \left( x,x_{1}\right) \cap \rho \left( x_{1},x_{2}\right)
\cap \ldots \cap \rho \left( x_{n},y\right) \right) $. Then there is $%
\mathcal{V}\in \rho \left( x,x_{1}\right) \cap \rho \left(
x_{1},x_{2}\right) \cap \ldots \cap \rho \left( x_{n},y\right) $ with $%
\mathcal{V}\leqslant \frac{1}{2^{n}}\mathcal{U}$, which means the existence
of coverings $\mathcal{U}_{1},...,\mathcal{U}_{n-1}\in \mathcal{O}$ such
that 
\begin{eqnarray*}
\mathcal{V} &\leqslant &\frac{1}{2}\mathcal{U}_{n-1},~\mathcal{U}%
_{n-1}\leqslant \frac{1}{2}\mathcal{U}_{n-2},~...~,~\mathcal{U}_{1}\leqslant 
\frac{1}{2}\mathcal{U}\text{,} \\
x,x_{2} &\in &\mathrm{St}\left[ x_{1},\mathcal{V}\right] ,~x_{1},x_{3}\in 
\mathrm{St}\left[ x_{2},\mathcal{V}\right] ,~...~,~x_{n-1},y\in \mathrm{St}%
\left[ x_{n},\mathcal{V}\right] .
\end{eqnarray*}%
As $\mathcal{V}\leqslant \frac{1}{2}\mathcal{U}_{n-1}$, it follows that 
\begin{equation*}
x,x_{3}\in \mathrm{St}\left[ x_{2},\mathcal{U}_{n-1}\right] ,~x_{2},x_{4}\in 
\mathrm{St}\left[ x_{3},\mathcal{U}_{n-1}\right] ,~...~,~x_{n-2},y\in 
\mathrm{St}\left[ x_{n-1},\mathcal{V}\right] .
\end{equation*}%
By using successively the relations $\mathcal{U}_{n-i}\leqslant \frac{1}{2}%
\mathcal{U}_{n-i-1}$ and $\mathcal{U}_{1}\leqslant \frac{1}{2}\mathcal{U}$
we obtain $y\in \mathrm{St}\left[ x,\mathcal{U}\right] $. Hence $\mathcal{U}%
\in \rho \left( x,y\right) $, and therefore 
\begin{equation*}
\rho \left( x,y\right) \supset n\left( \rho \left( x,x_{1}\right) \cap \rho
\left( x_{1},x_{2}\right) \cap \ldots \cap \rho \left( x_{n},y\right)
\right) .
\end{equation*}
\end{proof}

We can use $\rho $ to characterize convergence, as the following.

\begin{proposition}
\label{P10} Let $\left( x_{\lambda }\right) $ be a net in $X$. Then $%
x_{\lambda }\rightarrow x$ if and only if $\rho \left( x_{\lambda },x\right)
\rightarrow \mathcal{O}$.
\end{proposition}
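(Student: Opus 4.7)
The plan is to unwind both sides of the equivalence to the same statement about stars, using only the definition of $\rho$, the definition of net convergence in $\mathcal{P}(\mathcal{O})$, and the fact (from property $(2)'$ of admissibility) that the stars $\mathrm{St}[x,\mathcal{U}]$ with $\mathcal{U} \in \mathcal{O}$ form a neighborhood base at $x$.

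First I would translate the right-hand side. By Definition \ref{Convergence}, $\rho(x_\lambda, x) \to \mathcal{O}$ means: for every $\mathcal{U} \in \mathcal{O}$ there exists $\lambda_0$ such that $\mathcal{U} \in \rho(x_\lambda, x)$ for all $\lambda \geq \lambda_0$. By the definition of $\rho$, the condition $\mathcal{U} \in \rho(x_\lambda, x)$ is exactly $x \in \mathrm{St}[x_\lambda, \mathcal{U}]$, and by the symmetry established in Proposition \ref{P1}(1) — which at the level of stars just records that $x \in \mathrm{St}[x_\lambda,\mathcal{U}]$ iff some $U \in \mathcal{U}$ contains both $x$ and $x_\lambda$ iff $x_\lambda \in \mathrm{St}[x,\mathcal{U}]$ — this is equivalent to $x_\lambda \in \mathrm{St}[x,\mathcal{U}]$. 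So $\rho(x_\lambda, x) \to \mathcal{O}$ is equivalent to: for every $\mathcal{U} \in \mathcal{O}$ there exists $\lambda_0$ with $x_\lambda \in \mathrm{St}[x,\mathcal{U}]$ for all $\lambda \geq \lambda_0$.

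Next I would translate the left-hand side. By definition, $x_\lambda \to x$ in $X$ means that for every neighborhood $W$ of $x$, eventually $x_\lambda \in W$. Since $\{\mathrm{St}[x,\mathcal{U}] : \mathcal{U} \in \mathcal{O}\}$ is a neighborhood base at $x$, this is equivalent to: for every $\mathcal{U} \in \mathcal{O}$ there is $\lambda_0$ such that $x_\lambda \in \mathrm{St}[x,\mathcal{U}]$ for all $\lambda \geq \lambda_0$.

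The two translations coincide verbatim, which yields the equivalence. I do not foresee any real obstacle; the only subtlety is invoking the symmetry $x \in \mathrm{St}[x_\lambda,\mathcal{U}] \iff x_\lambda \in \mathrm{St}[x,\mathcal{U}]$ so that the membership $\mathcal{U} \in \rho(x_\lambda,x)$ lines up with the neighborhood base condition at $x$ rather than at $x_\lambda$.
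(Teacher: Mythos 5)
Your proposal is correct and follows essentially the same route as the paper: both arguments simply unwind the definition of convergence in $\mathcal{P}(\mathcal{O})$ and the definition of $\rho$, match the resulting condition against the neighborhood base $\{\mathrm{St}[x,\mathcal{U}]:\mathcal{U}\in\mathcal{O}\}$ at $x$, and use the symmetry $x\in\mathrm{St}[x_\lambda,\mathcal{U}]\iff x_\lambda\in\mathrm{St}[x,\mathcal{U}]$ (which the paper uses implicitly). No gaps.
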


\begin{proof}
Suppose that $x_{\lambda }\rightarrow x$ and let $\mathcal{U}\in \mathcal{O}$%
. There is $\lambda _{0}$ such that $x_{\lambda }\in \mathrm{St}\left[ x,%
\mathcal{U}\right] $ for all $\lambda \geq \lambda _{0}$, which means that $%
\mathcal{U}\in \rho \left( x_{\lambda },x\right) $ whenever $\lambda \geq
\lambda _{0}$. Thus $\rho \left( x_{\lambda },x\right) \rightarrow \mathcal{O%
}$. As to the converse, suppose that $\rho \left( x_{\lambda },x\right) $
converges to $\mathcal{O}$. For each $\mathcal{U}\in \mathcal{O}$, there is $%
\lambda _{0}$ such that $\mathcal{U}\in \rho \left( x_{\lambda },x\right) $
whenever $\lambda \geq \lambda _{0}$. Hence $x_{\lambda }\in \mathrm{St}%
\left[ x,\mathcal{U}\right] $ for all $\lambda \geq \lambda _{0}$, and
therefore $x_{\lambda }\rightarrow x$.
\end{proof}

We now introduce the notions of bounded set and diameter.

\begin{definition}
A nonempty subset $Y\subset X$ is called\ \textbf{bounded} with respect to $%
\mathcal{O}$ if there is some $\mathcal{U}\in \mathcal{O}$ such that $%
\mathcal{U}\in \rho \left( x,y\right) $ for all $x,y\in Y$
\end{definition}

\begin{definition}
Let $Y\subset X$ be a nonempty set. The \textbf{diameter} of $Y$ is the set $%
\mathrm{D}\left( Y\right) \in \mathcal{P}\left( \mathcal{O}\right) $ defined
as 
\begin{equation*}
\mathrm{D}\left( Y\right) =\bigcap\limits_{x,y\in Y}\rho \left( x,y\right) .
\end{equation*}
\end{definition}

If $Y\subset X$ is a bounded set then $\mathrm{D}\left( Y\right) \neq
\emptyset $. It is easily seen that $\mathcal{U}\in \mathrm{D}\left(
Y\right) $ if and only if $Y$ is bounded by $\mathcal{U}$. In the following
we present some properties of diameter.

\begin{proposition}
\label{P9}Let $A,B\subset X$ be nonempty subsets. The following properties
hold:

\begin{enumerate}
\item $\rho \left( x,y\right) \prec \mathrm{D}\left( A\right) $ for all $%
x,y\in A$.

\item $\mathrm{D}\left( A\right) \prec \mathrm{D}\left( B\right) $ if $%
A\subset B$.

\item $\mathrm{D}\left( A\right) \prec \mathrm{D}\left( \mathrm{cls}\left(
A\right) \right) \prec 2\mathrm{D}\left( A\right) .$
\end{enumerate}
\end{proposition}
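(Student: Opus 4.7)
My plan is to unpack each item directly from the definition $\mathrm{D}(Y)=\bigcap_{x,y\in Y}\rho(x,y)$, remembering that $\prec$ is reverse inclusion, so an inequality $\mathrm{D}(A)\prec \mathrm{D}(B)$ reads as the set-theoretic inclusion $\mathrm{D}(A)\supset \mathrm{D}(B)$. Under this translation items (1) and (2) are both one-line observations: for $x,y\in A$, the factor $\rho(x,y)$ appears in the intersection defining $\mathrm{D}(A)$, hence $\mathrm{D}(A)\subset \rho(x,y)$, giving $\rho(x,y)\prec \mathrm{D}(A)$; and $A\subset B$ enlarges the index set of the intersection, so $\mathrm{D}(B)\subset \mathrm{D}(A)$, i.e.\ $\mathrm{D}(A)\prec \mathrm{D}(B)$. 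The first half of item (3), $\mathrm{D}(A)\prec \mathrm{D}(\mathrm{cls}(A))$, is then immediate from (2) applied to $A\subset \mathrm{cls}(A)$.

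The substantive part is the second half, $\mathrm{D}(\mathrm{cls}(A))\prec 2\mathrm{D}(A)$, which unfolds as $2\mathrm{D}(A)\subset \mathrm{D}(\mathrm{cls}(A))$. I would take $\mathcal{U}\in 2\mathrm{D}(A)$ and unpack the two layers of double-refinement: there exist $\mathcal{V}\in \mathrm{D}(A)$ and $\mathcal{W}\in \mathcal{O}$ with $\mathcal{V}\leqslant \tfrac{1}{2}\mathcal{W}$ and $\mathcal{W}\leqslant \tfrac{1}{2}\mathcal{U}$. Given $x,y\in \mathrm{cls}(A)$, the goal is to produce $U\in \mathcal{U}$ containing both $x$ and $y$. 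Since $\mathrm{St}[x,\mathcal{V}]$ and $\mathrm{St}[y,\mathcal{V}]$ are neighborhoods of $x$ and $y$, I can pick $a,b\in A$ with $a\in \mathrm{St}[x,\mathcal{V}]$ and $b\in \mathrm{St}[y,\mathcal{V}]$, yielding $V_{1},V_{2}\in \mathcal{V}$ containing $\{x,a\}$ and $\{y,b\}$ respectively, while $\mathcal{V}\in \mathrm{D}(A)$ gives $V_{3}\in \mathcal{V}$ containing $\{a,b\}$.

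From here the argument is a standard two-step triangle-inequality chase using double-refinement. The intersections $V_{1}\cap V_{3}$ and $V_{2}\cap V_{3}$ are nonempty (they contain $a$ and $b$), so $\mathcal{V}\leqslant \tfrac{1}{2}\mathcal{W}$ produces $W_{1},W_{2}\in \mathcal{W}$ with $V_{1}\cup V_{3}\subset W_{1}$ and $V_{2}\cup V_{3}\subset W_{2}$; in particular $x\in W_{1}$, $y\in W_{2}$, and $\{a,b\}\subset W_{1}\cap W_{2}$. Since $W_{1}\cap W_{2}\neq \emptyset$, applying $\mathcal{W}\leqslant \tfrac{1}{2}\mathcal{U}$ yields $U\in \mathcal{U}$ with $W_{1}\cup W_{2}\subset U$, whence $x,y\in U$ and $\mathcal{U}\in \rho(x,y)$. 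As $x,y\in \mathrm{cls}(A)$ were arbitrary, $\mathcal{U}\in \mathrm{D}(\mathrm{cls}(A))$. The main (and essentially only) obstacle is keeping track of the bookkeeping in this chase: one needs exactly two layers of double-refinement because the path from $x$ to $y$ traverses three intermediate $\mathcal{V}$-sets ($V_{1}, V_{3}, V_{2}$), which requires one halving to amalgamate the pairs $(V_{1},V_{3})$ and $(V_{3},V_{2})$ inside $\mathcal{W}$, and a second to merge the resulting $W_{1},W_{2}$ inside a single element of $\mathcal{U}$.
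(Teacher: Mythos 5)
Your proof is correct and follows essentially the same route as the paper: items (1) and (2) are read off the definition, the first half of (3) follows from (2), and the second half is the same two-layer star chase through the points $x, a, b, y$ — the only difference is that the paper packages that chase as an application of Proposition \ref{P1}(5) (the generalized triangle inequality $\rho(\bar{x},\bar{y})\prec 2\left(\rho(\bar{x},x)\cap\rho(x,y)\cap\rho(y,\bar{y})\right)$) rather than redoing it inline. One negligible imprecision: the intermediate covering $\mathcal{W}$ witnessing $\mathcal{V}\leqslant\frac{1}{2^{2}}\mathcal{U}$ need only be a covering of $X$, not an element of $\mathcal{O}$, but your argument uses nothing beyond the refinement relations, so this does not matter.
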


\begin{proof}
Items $\left( 1\right) $ and $\left( 2\right) $ follow immediately by
definition. Then we prove item $\left( 3\right) $. Since $A\subset \mathrm{%
cls}\left( A\right) $, the first inequality $\mathrm{D}\left( A\right) \prec 
\mathrm{D}\left( \mathrm{cls}\left( A\right) \right) $ is clear. Let $%
\mathcal{U}\in 2\mathrm{D}\left( A\right) $ and $\bar{x},\bar{y}\in \mathrm{%
cls}\left( A\right) $. Then there is $\mathcal{V}\in \mathrm{D}\left(
A\right) $ such that $\mathcal{V}\leqslant \frac{1}{2^{2}}\mathcal{U}$ and
there are $x,y\in A$ such that $x\in \mathrm{St}\left[ \bar{x},\mathcal{V}%
\right] $ and $y\in \mathrm{St}\left[ \bar{y},\mathcal{V}\right] $. As $%
\mathcal{V}\in \rho \left( x,y\right) $, it follows that $\mathcal{V}\in
\rho \left( x,\bar{x}\right) \cap \rho \left( y,\bar{y}\right) \cap \rho
\left( x,y\right) $, hence $\mathcal{U}\in 2\left( \rho \left( x,\bar{x}%
\right) \cap \rho \left( y,\bar{y}\right) \cap \rho \left( x,y\right)
\right) $. Since $\rho \left( \bar{x},\bar{y}\right) \prec 2\left( \rho
\left( x,\bar{x}\right) \cap \rho \left( y,\bar{y}\right) \cap \rho \left(
x,y\right) \right) $, we have $\mathcal{U}\in \rho \left( \bar{x},\bar{y}%
\right) $. Thus $2\mathrm{D}\left( A\right) \subset \rho \left( \bar{x},\bar{%
y}\right) $ for arbitraries $\bar{x},\bar{y}\in \mathrm{cls}\left( A\right) $%
, which means the inequality $\mathrm{D}\left( \mathrm{cls}\left( A\right)
\right) \prec 2\mathrm{D}\left( A\right) $.
\end{proof}

\section{\label{SectionCompletespace}Complete admissible spaces}

In this section we regard the concept of completeness by using the
admissible structure. We extend some classical theorems which were not
previously discussed in the general setting of uniform spaces. Throughout,
there is a fixed admissible space $X$ endowed with an admissible family $%
\mathcal{O}$ of open coverings of $X$.

The following definition approaches the notion of Cauchy net in uniform
spaces.

\begin{definition}
A net $\left( x_{\lambda }\right) _{\lambda \in \Lambda }$ in $X$ is $%
\mathcal{O}$\textbf{-Cauchy} (or just \textbf{Cauchy}) if for each $\mathcal{%
U}\in \mathcal{O}$ there is some $\lambda _{0}\in \Lambda $ such that $%
x_{\lambda _{1}}$ and $x_{\lambda _{2}}$ lie together in some element of $%
\mathcal{U}$, that is, $x_{\lambda _{1}}\in \mathrm{St}\left[ x_{\lambda
_{2}},\mathcal{U}\right] $, whenever $\lambda _{1},\lambda _{2}\geqslant
\lambda _{0}$.
\end{definition}

Note that a net $\left( x_{\lambda }\right) _{\lambda \in \Lambda }$ is
Cauchy if and only if for each $\mathcal{U}\in \mathcal{O}$ there is some $%
\lambda _{0}\in \Lambda $ such that $\mathcal{U}\in \rho \left( x_{\lambda
_{1}},x_{\lambda _{2}}\right) $ whenever $\lambda _{1},\lambda _{2}\geqslant
\lambda _{0}$.

It is well-known that every convergent net is Cauchy. However, a Cauchy net
need not be convergent, unless it admits convergent subnet, as the following.

\begin{proposition}
\label{Subnet} A Cauchy net that admits a convergent subnet is itself
convergent (with the same limit).
\end{proposition}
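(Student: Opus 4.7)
The plan is to exploit the double-refinement property together with the characterization of convergence via $\rho$ (Proposition \ref{P10}) and the triangle-like inequality in Proposition \ref{P1}(4). Let $(x_\lambda)_{\lambda \in \Lambda}$ be Cauchy and suppose a subnet $(x_{\lambda_\mu})_{\mu \in M}$ converges to some $x \in X$; the goal is to show $x_\lambda \to x$, that is, $\rho(x_\lambda,x)\to\mathcal{O}$.

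First, I fix $\mathcal{U}\in\mathcal{O}$ and pick $\mathcal{V}\in\mathcal{O}$ with $\mathcal{V}\leqslant\frac{1}{2}\mathcal{U}$. Since the net is Cauchy, there is $\lambda_0$ such that $\mathcal{V}\in\rho(x_{\lambda_1},x_{\lambda_2})$ whenever $\lambda_1,\lambda_2\geqslant\lambda_0$. Since the subnet converges to $x$, Proposition \ref{P10} gives a $\mu_0$ with $\mathcal{V}\in\rho(x_{\lambda_\mu},x)$ for all $\mu\geqslant\mu_0$; by the defining property of a subnet, I can then choose $\mu_1\geqslant\mu_0$ with $\lambda_{\mu_1}\geqslant\lambda_0$. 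For any $\lambda\geqslant\lambda_0$ this yields $\mathcal{V}\in\rho(x_\lambda,x_{\lambda_{\mu_1}})\cap\rho(x_{\lambda_{\mu_1}},x)$.

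Now I invoke Proposition \ref{P1}(4), namely $\rho(x_\lambda,x)\prec 1\bigl(\rho(x_\lambda,x_{\lambda_{\mu_1}})\cap\rho(x_{\lambda_{\mu_1}},x)\bigr)$. Since $\mathcal{V}$ belongs to the intersection and $\mathcal{V}\leqslant\tfrac{1}{2}\mathcal{U}$, the element $\mathcal{U}$ lies in $1\bigl(\rho(x_\lambda,x_{\lambda_{\mu_1}})\cap\rho(x_{\lambda_{\mu_1}},x)\bigr)$, hence $\mathcal{U}\in\rho(x_\lambda,x)$ for every $\lambda\geqslant\lambda_0$. This establishes $\rho(x_\lambda,x)\to\mathcal{O}$, so $x_\lambda\to x$ by Proposition \ref{P10}.

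The only delicate point is making sure the auxiliary index $\lambda_{\mu_1}$ simultaneously dominates $\lambda_0$ in $\Lambda$ and sits beyond $\mu_0$ in the subnet's index set; this is exactly what the cofinal-image condition in the definition of a subnet provides, so there is no real obstruction. Everything else is a direct unpacking of the triangle-type inequality via a single double-refinement, which is the uniform-space analogue of the standard metric argument ``$\mathrm{d}(x_\lambda,x)\leq\mathrm{d}(x_\lambda,x_{\lambda_{\mu_1}})+\mathrm{d}(x_{\lambda_{\mu_1}},x)$''.
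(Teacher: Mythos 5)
Your proof is correct and follows essentially the same route as the paper's: double-refine $\mathcal{U}$ to $\mathcal{V}$, use the Cauchy condition and the convergence of the subnet to place $x_\lambda$, a suitable subnet term, and $x$ pairwise within $\mathcal{V}$-stars, and then chain these via the triangle-type property to conclude $\mathcal{U}\in\rho(x_\lambda,x)$. The only cosmetic difference is that you phrase the final step through $\rho$ and Proposition \ref{P1}(4) while the paper unwinds the same inclusion directly in the star notation.
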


\begin{proof}
Let $\left( x_{\lambda }\right) _{\lambda \in \Lambda }$ be a Cauchy net in $%
X$ and $\left( x_{\lambda _{\eta }}\right) $ a convergent subnet with $%
x_{\lambda _{\eta }}\rightarrow x$ in $X$. For a given $\mathcal{U}\in 
\mathcal{O}$, take $\mathcal{V}\in \mathcal{O}$ such that $\mathcal{V}%
\leqslant \frac{1}{2}\mathcal{U}$. There is $\eta _{0}$ such that $\eta \geq
\eta _{0}$ implies $x_{\lambda _{\eta }}\in \mathrm{St}\left[ x,\mathcal{V}%
\right] $. Moreover, there is $\lambda _{0}$ such that $\mathcal{V}\in \rho
\left( x_{\lambda _{1}},x_{\lambda _{2}}\right) $ whenever $\lambda
_{1},\lambda _{2}\geqslant \lambda _{0}$. Take $\lambda _{0}^{\prime }$ such
that $\lambda _{0}^{\prime }\geq \lambda _{0}$ and $\lambda _{0}^{\prime
}\geq \lambda _{\eta _{0}}$, and then fix some $\lambda _{\eta }\geq \lambda
_{0}^{\prime }$. For $\lambda \geq \lambda _{0}^{\prime }$, we have $%
x_{\lambda _{\eta }}\in \mathrm{St}\left[ x,\mathcal{V}\right] \cap \mathrm{%
St}\left[ x_{\lambda },\mathcal{V}\right] $, which implies $x_{\lambda }\in 
\mathrm{St}\left[ x,\mathcal{U}\right] $. Therefore $x_{\lambda }\rightarrow
x$.
\end{proof}

\begin{definition}
If every Cauchy net in the admissible space $X$ converges then $X$ is called
a \textbf{complete admissible space}.
\end{definition}

The uniform local compactness is a criterion to find complete admissible
spaces, as the following.

\begin{definition}
The admissible space $X\,$is \textbf{uniformly locally compact} if there is
some $\mathcal{U}\in \mathcal{O}$ such that $\mathrm{cls}\left( \mathrm{St}%
\left[ x,\mathcal{U}\right] \right) $ is compact for every $x\in X$.
\end{definition}

Note that a uniformly locally compact admissible space is locally compact.
The following extends a well-known theorem of metric spaces.

\begin{proposition}
\label{Uniformlocalcompact} If the admissible space $X\,$is uniformly
locally compact then it is complete. Consequently, if all bounded and closed
subsets of $X\,$are compact then $X$ is locally compact and complete.
\end{proposition}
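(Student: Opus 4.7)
The plan for the first assertion is to use the Cauchy property to trap a tail of the net inside a star with compact closure, extract a convergent subnet there, and then invoke Proposition~\ref{Subnet}. Let $\mathcal{U}_0 \in \mathcal{O}$ be a covering witnessing uniform local compactness, and let $(x_\lambda)_{\lambda \in \Lambda}$ be Cauchy. Applying the Cauchy condition to $\mathcal{U}_0$ yields $\lambda_0$ such that $x_\lambda \in \mathrm{St}[x_{\lambda_0}, \mathcal{U}_0]$ for every $\lambda \geq \lambda_0$, so the tail lies in the compact set $K = \mathrm{cls}(\mathrm{St}[x_{\lambda_0}, \mathcal{U}_0])$. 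Any net in a compact space admits a convergent subnet, and a subnet of the tail is still a subnet of $(x_\lambda)$, so applying Proposition~\ref{Subnet} to this convergent subnet of the Cauchy net $(x_\lambda)$ delivers convergence of $(x_\lambda)$ itself.

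For the corollary the cleanest route is to deduce that the hypothesis ``every bounded closed subset is compact'' already forces uniform local compactness, so that the first part applies. Pick any $\mathcal{V} \in \mathcal{O}$ and use property $(1)$ of admissibility twice to obtain $\mathcal{V}', \mathcal{U} \in \mathcal{O}$ with $\mathcal{V}' \leqslant \tfrac{1}{2}\mathcal{V}$ and $\mathcal{U} \leqslant \tfrac{1}{2}\mathcal{V}'$. The refinement $\mathcal{U} \leqslant \tfrac{1}{2}\mathcal{V}'$ yields $\mathrm{cls}(\mathrm{St}[x, \mathcal{U}]) \subseteq \mathrm{St}[x, \mathcal{V}']$: for $z$ in the closure and any $U \in \mathcal{U}$ containing $z$, the open set $U$ meets $\mathrm{St}[x, \mathcal{U}]$, so there is $U' \in \mathcal{U}$ with $x \in U'$ and $U \cap U' \neq \emptyset$, and the double-refinement puts $U \cup U'$ inside a member of $\mathcal{V}'$. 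Similarly, $\mathcal{V}' \leqslant \tfrac{1}{2}\mathcal{V}$ shows that $\mathrm{St}[x, \mathcal{V}']$ is bounded by $\mathcal{V}$. Hence $\mathrm{cls}(\mathrm{St}[x, \mathcal{U}])$ is closed and bounded, thus compact by hypothesis, and since the same $\mathcal{U}$ works at every point, $X$ is uniformly locally compact; completeness then follows from the first part and local compactness is automatic.

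No single step is genuinely hard; the main care is needed for the subnet manipulation in the first paragraph. One must recall that compactness of $K$ guarantees a convergent subnet for \emph{every} net in $K$ (equivalently, every net has a cluster point) and verify that the convergent subnet of the tail remains a bona fide subnet of the original $(x_\lambda)$, so that Proposition~\ref{Subnet} is applicable. The second paragraph is essentially bookkeeping with the double-refinement axiom; the only noteworthy observation is that one double-refinement upgrades an open star to contain its closure inside the next star up, while a second double-refinement promotes boundedness.
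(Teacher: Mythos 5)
Your proof is correct and follows essentially the same route as the paper: trap a tail of the Cauchy net in a star with compact closure and invoke Proposition~\ref{Subnet}, then for the second assertion use two double-refinements to show the closure of a small star is closed and bounded, hence compact. The only cosmetic difference is that you obtain boundedness of $\mathrm{cls}\left( \mathrm{St}\left[ x,\mathcal{U}\right] \right)$ by first enclosing it in the intermediate star $\mathrm{St}\left[ x,\mathcal{V}^{\prime }\right]$, whereas the paper bounds the closure directly via the triangle-type inequality for $\rho$; both arguments use the same ingredients.
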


\begin{proof}
Take $\mathcal{U}\in \mathcal{O}$ such that $\mathrm{cls}\left( \mathrm{St}%
\left[ x,\mathcal{U}\right] \right) $ is compact for every $x\in X$ and let $%
\left( x_{\lambda }\right) $ be a Cauchy net in $X$. Then there is $\lambda
_{0}$ such that $\lambda _{1},\lambda _{2}\geq \lambda _{0}$ implies $%
\mathcal{U}\in \rho \left( x_{\lambda _{1}},x_{\lambda _{2}}\right) $. In
particular, $x_{\lambda }\in \mathrm{St}\left[ x_{\lambda _{0}},\mathcal{U}%
\right] $ for all $\lambda \geq \lambda _{0}$. Then the subset $\left\{
x_{\lambda }\right\} _{\lambda \geq \lambda _{0}}$ is contained in the
compact set $\mathrm{cls}\left( \mathrm{St}\left[ x_{\lambda _{0}},\mathcal{U%
}\right] \right) $. Hence we may assume that $\left( x_{\lambda }\right)
_{\lambda \geq \lambda _{0}}$ converges, and therefore $\left( x_{\lambda
}\right) _{\lambda \in \Lambda }$ converges, by Proposition \ref{Subnet}.
Thus $X$ is complete. Now, suppose that all bounded and closed subsets of $X$
are compact. Take $\mathcal{U}\in \mathcal{O}$ and $\mathcal{V}\in \mathcal{O%
}$ with $\mathcal{V}\leqslant \frac{1}{4}\mathcal{U}$. We claim that $%
\mathrm{cls}\left( \mathrm{St}\left[ x,\mathcal{V}\right] \right) $ is
bounded for every $x\in X$. Indeed, take $\mathcal{V}^{\prime }\in \mathcal{O%
}$ with $\mathcal{V}\leqslant \frac{1}{2}\mathcal{V}^{\prime }$ and $%
\mathcal{V}^{\prime }\leqslant \frac{1}{2}\mathcal{U}$. If $y\in \mathrm{cls}%
\left( \mathrm{St}\left[ x,\mathcal{V}\right] \right) $ then $\mathrm{St}%
\left[ x,\mathcal{V}\right] \cap \mathrm{St}\left[ y,\mathcal{V}\right] \neq
\emptyset $. Hence $\mathcal{V}^{\prime }\in \rho \left( x,y\right) $. For $%
y,z\in \mathrm{cls}\left( \mathrm{St}\left[ x,\mathcal{V}\right] \right) $,
it follows that $\mathcal{V}^{\prime }\in \rho \left( x,y\right) \cap \rho
\left( x,z\right) $, and hence $\mathcal{U}\in \rho \left( y,z\right) $.
Thus $\mathrm{cls}\left( \mathrm{St}\left[ x,\mathcal{V}\right] \right) $ is
bounded and closed. By hypothesis, $\mathrm{cls}\left( \mathrm{St}\left[ x,%
\mathcal{V}\right] \right) $ is compact for every $x\in X$, hence $X$ is
uniformly locally compact. By the first part of the proof, $X$ is complete
and locally compact.
\end{proof}

The following theorem generalizes the classical \emph{Cantor intersection
theorem} from metric spaces. A decreasing net $\left( F_{\lambda }\right) $
of nonempty closed sets of $X$ means that $F_{\lambda _{1}}\subset
F_{\lambda _{2}}$ whenever $\lambda _{1}\geq \lambda _{2}$.

\begin{theorem}[Cantor theorem]
\label{T6} The admissible space $X$ is complete if and only if every
decreasing net $\left( F_{\lambda }\right) $ of nonempty closed sets of $X$,
with $\mathrm{D}\left( F_{\lambda }\right) \rightarrow \mathcal{O}$, has
nonempty intersection. If $X$ is Hausdorff, this intersection consists of a
single point.
\end{theorem}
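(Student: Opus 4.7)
The plan is to prove both implications of the equivalence and then treat the Hausdorff uniqueness at the end.

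For the forward direction, assume $X$ is complete and let $(F_\lambda)_{\lambda \in \Lambda}$ be a decreasing net of nonempty closed sets with $\mathrm{D}(F_\lambda) \to \mathcal{O}$. Using the axiom of choice, pick $x_\lambda \in F_\lambda$ for each $\lambda$. First I would verify that $(x_\lambda)$ is a Cauchy net: given $\mathcal{U} \in \mathcal{O}$, pick $\lambda_0$ with $\mathcal{U} \in \mathrm{D}(F_\lambda)$ for all $\lambda \geq \lambda_0$; then for $\lambda_1, \lambda_2 \geq \lambda_0$ the points $x_{\lambda_1}, x_{\lambda_2}$ both lie in $F_{\lambda_0}$ by the decreasing property, so $\mathcal{U} \in \rho(x_{\lambda_1}, x_{\lambda_2})$. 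By completeness, $x_\lambda \to x$ for some $x \in X$. To see $x \in \bigcap_\lambda F_\lambda$, fix $\lambda$; the tail $(x_\mu)_{\mu \geq \lambda}$ lies in $F_\lambda$, and since $F_\lambda$ is closed, any limit point of the tail must be in $F_\lambda$; in particular $x \in F_\lambda$.

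For the converse, assume every such decreasing net has nonempty intersection, and let $(x_\lambda)_{\lambda \in \Lambda}$ be an arbitrary Cauchy net. Set $A_\lambda = \{x_\mu : \mu \geq \lambda\}$ and $F_\lambda = \mathrm{cls}(A_\lambda)$; these are nonempty closed sets and the net $(F_\lambda)$ is decreasing. The key step is to verify $\mathrm{D}(F_\lambda) \to \mathcal{O}$: given $\mathcal{U} \in \mathcal{O}$, choose $\mathcal{V} \in \mathcal{O}$ with $\mathcal{V} \leqslant \tfrac{1}{2^2}\mathcal{U}$, and using the Cauchy property pick $\lambda_0$ such that $\mathcal{V} \in \rho(x_{\mu_1}, x_{\mu_2})$ whenever $\mu_1, \mu_2 \geq \lambda_0$. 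For $\lambda \geq \lambda_0$, this says $\mathcal{V} \in \mathrm{D}(A_\lambda)$, hence $\mathcal{U} \in 2\mathrm{D}(A_\lambda)$, and Proposition \ref{P9}(3) yields $\mathcal{U} \in \mathrm{D}(\mathrm{cls}(A_\lambda)) = \mathrm{D}(F_\lambda)$. The hypothesis then furnishes $x \in \bigcap_\lambda F_\lambda$. To finish, I would show $x_\lambda \to x$: given $\mathcal{U} \in \mathcal{O}$, choose $\lambda_0$ with $\mathcal{U} \in \mathrm{D}(F_\lambda)$ for $\lambda \geq \lambda_0$; then both $x$ and $x_\mu$ lie in $F_{\lambda_0}$ for $\mu \geq \lambda_0$, so $x_\mu \in \mathrm{St}[x,\mathcal{U}]$.

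For the Hausdorff uniqueness, suppose $x, y \in \bigcap_\lambda F_\lambda$. For any $\mathcal{U} \in \mathcal{O}$, eventually $\mathcal{U} \in \mathrm{D}(F_\lambda)$, so in particular $y \in \mathrm{St}[x, \mathcal{U}]$. This means $\mathcal{U} \in \rho(x, y)$ for every $\mathcal{U}$, so $\rho(x, y) = \mathcal{O}$, and Proposition \ref{P1}(3) forces $x = y$.

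The main technical obstacle is the converse direction, specifically the transition from $\mathrm{D}(A_\lambda) \to \mathcal{O}$ to $\mathrm{D}(F_\lambda) \to \mathcal{O}$, which requires inserting the factor of $\tfrac{1}{2^2}$ when choosing $\mathcal{V}$ so that the closure-to-interior expansion provided by Proposition \ref{P9}(3) lands us back inside the covering $\mathcal{U}$ we started with. Everything else reduces to bookkeeping with the definitions of $\rho$, $\mathrm{D}$, and Cauchy convergence.
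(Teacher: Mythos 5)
Your proposal is correct and follows essentially the same route as the paper's proof: the forward direction via a Cauchy net of chosen points $x_\lambda\in F_\lambda$, the converse via the tails $H_\lambda$ and their closures controlled by Proposition \ref{P9}(3), and the same Hausdorff uniqueness argument. The only cosmetic difference is that you verify $\mathrm{D}\left( \mathrm{cls}\left( A_{\lambda }\right) \right) \rightarrow \mathcal{O}$ by hand, choosing $\mathcal{V}\leqslant \frac{1}{2^{2}}\mathcal{U}$, where the paper instead invokes Proposition \ref{P8} to pass from $\mathrm{D}\left( H_{\lambda }\right) \rightarrow \mathcal{O}$ to $2\mathrm{D}\left( H_{\lambda }\right) \rightarrow \mathcal{O}$.
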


\begin{proof}
Suppose that $X$ is complete and let $\left( F_{\lambda }\right) $ be a
decreasing net of nonempty closed sets of $X$ with $\mathrm{D}\left(
F_{\lambda }\right) \rightarrow \mathcal{O}$. For each $\lambda $, take $%
x_{\lambda }\in F_{\lambda }$. For a given $\mathcal{U}\in \mathcal{O}$,
there is $\lambda _{0}$ such that $\lambda \geq \lambda _{0}$ implies $%
\mathcal{U}\in \mathrm{D}\left( F_{\lambda }\right) $. For $\lambda
_{1},\lambda _{2}\geq \lambda _{0}$, we have $F_{\lambda _{1}}\cup
F_{\lambda _{2}}\subset F_{\lambda _{0}}$, and then $x_{\lambda
_{1}},x_{\lambda _{2}}\in F_{\lambda _{0}}$. As $\mathcal{U}\in \mathrm{D}%
\left( F_{\lambda _{0}}\right) $, it follows that $\mathcal{U}\in \rho
\left( x_{\lambda _{1}},x_{\lambda _{2}}\right) $ whenever $\lambda
_{1},\lambda _{2}\geq \lambda _{0}$. Hence $\left( x_{\lambda }\right) $ is
a Cauchy net. By completeness, $\left( x_{\lambda }\right) $ converges to
some point $x$. Now, for each $\lambda $, the subnet $\left( x_{\lambda
^{\prime }}\right) _{\lambda ^{\prime }\geq \lambda }$ also converges to $x$%
. As $x_{\lambda ^{\prime }}\in F_{\lambda ^{\prime }}\subset F_{\lambda }$,
for all $\lambda ^{\prime }\geq \lambda $, and $F_{\lambda }$ is closed, it
follows that $x\in F_{\lambda }$. Therefore $x\in \bigcap\limits_{\lambda
}F_{\lambda }$. As to the converse, suppose that every decreasing net $%
\left( F_{\lambda }\right) $ of nonempty closed sets of $X$, with $\mathrm{D}%
\left( F_{\lambda }\right) \rightarrow \mathcal{O}$, has nonempty
intersection. Let $\left( x_{\lambda }\right) $ be a Cauchy net. For each $%
\lambda $, define the set $H_{\lambda }=\left\{ x_{\lambda ^{\prime
}}:\lambda ^{\prime }\geq \lambda \right\} $. Then $H_{\lambda }$ is a
nonempty subset of $X$ and $H_{\lambda _{1}}\subset H_{\lambda _{2}}$
whenever $\lambda _{1}\geq \lambda _{2}$. We claim that $\mathrm{D}\left(
H_{\lambda }\right) \rightarrow \mathcal{O}$. In fact, for a given $\mathcal{%
U}\in \mathcal{O}$ there is $\lambda _{0}$ such that $\mathcal{U}\in \rho
\left( x_{\lambda _{1}},x_{\lambda _{2}}\right) $ whenever $\lambda
_{1},\lambda _{2}\geq \lambda _{0}$. If $x_{\lambda _{1}},x_{\lambda
_{2}}\in H_{\lambda }$, with $\lambda \geq \lambda _{0}$, we have $\lambda
_{1},\lambda _{2}\geq \lambda \geq \lambda _{0}$ and then $\mathcal{U}\in
\rho \left( x_{\lambda _{1}},x_{\lambda _{2}}\right) $. Hence $\mathcal{U}%
\in \mathrm{D}\left( H_{\lambda }\right) $ whenever $\lambda \geq \lambda
_{0}$, and therefore $\mathrm{D}\left( H_{\lambda }\right) \rightarrow 
\mathcal{O}$. Now, by Proposition \ref{P8}, we have $2\mathrm{D}\left(
H_{\lambda }\right) \rightarrow \mathcal{O}$. By Proposition \ref{P9}, we
have $\mathrm{D}\left( H_{\lambda }\right) \prec \mathrm{D}\left( \mathrm{cls%
}\left( H_{\lambda }\right) \right) \prec 2\mathrm{D}\left( H_{\lambda
}\right) $. Thus $\mathrm{D}\left( \mathrm{cls}\left( H_{\lambda }\right)
\right) \rightarrow \mathcal{O}$. Since $\mathrm{cls}\left( H_{\lambda
_{1}}\right) \subset \mathrm{cls}\left( H_{\lambda _{2}}\right) $ whenever $%
\lambda _{1}\geq \lambda _{2}$, we have obtain a decreasing net $\left( 
\mathrm{cls}\left( H_{\lambda }\right) \right) $ of nonempty closed sets of $%
X$ with $\mathrm{D}\left( \mathrm{cls}\left( H_{\lambda }\right) \right)
\rightarrow \mathcal{O}$. By hypothesis, the intersection $%
\bigcap\limits_{\lambda }\mathrm{cls}\left( H_{\lambda }\right) $ is
nonempty. Then we can take a point $x\in \bigcap\limits_{\lambda }\mathrm{cls%
}\left( H_{\lambda }\right) $. We claim that $x_{\lambda }\rightarrow x$. By
Proposition \ref{P10}, it means to prove that $\rho \left( x_{\lambda
},x\right) \rightarrow \mathcal{O}$. For a given $\mathcal{U}\in \mathcal{O}$%
, there is $\lambda _{0}$ such that $\lambda \geq \lambda _{0}$ implies $%
\mathcal{U}\in \mathrm{D}\left( \mathrm{cls}\left( H_{\lambda }\right)
\right) $. As $x_{\lambda },x\in \mathrm{cls}\left( H_{\lambda }\right) $,
it follows that $\mathcal{U}\in \rho \left( x_{\lambda },x\right) $ whenever 
$\lambda \geq \lambda _{0}$. Hence $\rho \left( x_{\lambda },x\right)
\rightarrow \mathcal{O}$. Therefore the Cauchy net $\left( x_{\lambda
}\right) $ is convergent and $X$ is a complete admissible space. We now show
the second part of the theorem. Assume that $X$ is Hausdorff. Suppose that $%
X $ is complete and let $\left( F_{\lambda }\right) $ be a decreasing net of
nonempty closed sets of $X$ with $\mathrm{D}\left( F_{\lambda }\right)
\rightarrow \mathcal{O}$. By the first part of the proof, there is a point $%
x\in \bigcap\limits_{\lambda }F_{\lambda }$. If $y\in
\bigcap\limits_{\lambda }F_{\lambda }$ and $\mathcal{U}\in \mathcal{O}$ then 
$\mathcal{U}\in \rho \left( x,y\right) $, because $\mathrm{D}\left(
F_{\lambda }\right) \rightarrow \mathcal{O}$ and $x,y\in F_{\lambda }$ for
all $\lambda $. This means that $\rho \left( x,y\right) =\mathcal{O}$, and
therefore $x=y$.
\end{proof}

Complete admissible spaces seem quite a bit like compact spaces. The
difference between completeness and compactness is the total boundedness.
Recall that a covering uniformity on $X$ is totally bounded if it has a base
consisting of finite covers. If $X$ is equipped with a totally bounded
covering uniformity, it is called a \emph{totally bounded uniform space} (%
\cite[Definition 39.7]{Will}). Inspired by the definition of totally bounded
metric space, we shall provide an approach of total boundedness by means of
the notion of diameter.

\begin{definition}
\label{Totallybounded}The admissible space $X$ is said to be \textbf{totally
bounded} with respect to $\mathcal{O}$ if for each $\mathcal{U}\in \mathcal{O%
}$ there is a finite cover $X=X_{1}\cup \ldots \cup X_{n}$ by sets with
diameter containing $\mathcal{U}$.
\end{definition}

In the end of this section we show the connection of Definition \ref%
{Totallybounded} with the current definition of totally bounded uniform
space.

\begin{proposition}
\label{P18} The admissible space $X$ is totally bounded if and only if for
each $\mathcal{U}\in \mathcal{O}$ there is a finite sequence $%
x_{1},...,x_{n} $ of points in $X$ such that $X=\bigcup\limits_{i=1}^{n}%
\mathrm{St}\left[ x_{i},\mathcal{U}\right] $.
\end{proposition}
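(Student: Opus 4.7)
The plan is to unpack the definitions on both sides. The key identification is that $\mathcal{U}\in\mathrm{D}(Y)$ means precisely that $\mathcal{U}\in\rho(x,y)$ for every $x,y\in Y$, i.e., that every two points of $Y$ lie together in some element of $\mathcal{U}$, which in turn is equivalent to $Y\subset\mathrm{St}[x,\mathcal{U}]$ for every (equivalently, some) $x\in Y$.

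For the forward direction, I would fix $\mathcal{U}\in\mathcal{O}$, invoke Definition~\ref{Totallybounded} to obtain a finite cover $X=X_{1}\cup\cdots\cup X_{n}$ with $\mathcal{U}\in\mathrm{D}(X_{i})$ for each $i$, and simply pick a point $x_{i}\in X_{i}$. The observation above then yields $X_{i}\subset\mathrm{St}[x_{i},\mathcal{U}]$, hence $X=\bigcup_{i=1}^{n}\mathrm{St}[x_{i},\mathcal{U}]$. This step is essentially immediate.

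For the reverse direction, the only real ingredient is the double-refinement property. Given $\mathcal{U}\in\mathcal{O}$, I would use the admissibility of $\mathcal{O}$ to pick $\mathcal{V}\in\mathcal{O}$ with $\mathcal{V}\leqslant\frac{1}{2}\mathcal{U}$, then apply the hypothesis to $\mathcal{V}$ to obtain points $x_{1},\ldots,x_{n}$ with $X=\bigcup_{i=1}^{n}\mathrm{St}[x_{i},\mathcal{V}]$. Setting $X_{i}=\mathrm{St}[x_{i},\mathcal{V}]$, I must verify $\mathcal{U}\in\mathrm{D}(X_{i})$. Given $y,z\in X_{i}$, there exist $V,V^{\prime}\in\mathcal{V}$ with $x_{i}\in V\cap V^{\prime}$, $y\in V$, $z\in V^{\prime}$. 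Since $V\cap V^{\prime}\neq\emptyset$, the double-refinement condition produces some $U\in\mathcal{U}$ with $V\cup V^{\prime}\subset U$, so $y,z\in U$, whence $\mathcal{U}\in\rho(y,z)$. As $y,z\in X_{i}$ were arbitrary, $\mathcal{U}\in\mathrm{D}(X_{i})$, and $\{X_{1},\ldots,X_{n}\}$ is the desired cover.

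There is no substantial obstacle here; the statement is a direct translation between two formulations of total boundedness, and the only nontrivial step is the use of $\mathcal{V}\leqslant\frac{1}{2}\mathcal{U}$ in the reverse direction, which plays exactly the role that the triangle inequality plays when proving the analogous equivalence for totally bounded metric spaces.
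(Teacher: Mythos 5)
Your proof is correct and follows essentially the same route as the paper: the forward direction by picking a point in each set of small diameter, and the reverse direction by taking $\mathcal{V}\leqslant\frac{1}{2}\mathcal{U}$ and covering by the stars $\mathrm{St}\left[ x_{i},\mathcal{V}\right] $. The only difference is that you spell out the double-refinement verification that the paper leaves implicit.
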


\begin{proof}
Suppose that $X$ is totally bounded and let $\mathcal{U}\in \mathcal{O}$.
Then $X=X_{1}\cup \ldots \cup X_{n}$ with $\mathcal{U}\in \mathrm{D}\left(
X_{i}\right) $. By choosing $x_{i}\in X_{i}$, we have $X_{i}\subset \mathrm{S%
}\left[ x_{i},\mathcal{U}\right] $. Thus $X=\bigcup\limits_{i=1}^{n}\mathrm{%
St}\left[ x_{i},\mathcal{U}\right] $. As to the converse, for a given $%
\mathcal{U}\in \mathcal{O}$ take $\mathcal{V}\in \mathcal{O}$ with $\mathcal{%
V}\leqslant \frac{1}{2}\mathcal{U}$. Then there is a finite sequence $%
x_{1},...,x_{n}$ such that $X=\bigcup\limits_{i=1}^{n}\mathrm{St}\left[
x_{i},\mathcal{V}\right] $. For $x,y\in \mathrm{St}\left[ x_{i},\mathcal{V}%
\right] $, we have $\mathcal{U}\in \rho \left( x,y\right) $, and then $%
\mathcal{U}\in \mathrm{D}\left( \mathrm{St}\left[ x_{i},\mathcal{V}\right]
\right) $. Therefore $X$ is totally bounded.
\end{proof}

In general, we say that a subset $Y\subset X$ is totally bounded if for each 
$\mathcal{U}\in \mathcal{O}$ there is a finite sequence $x_{1},...,x_{n}$
such that $Y\subset \bigcup\limits_{i=1}^{n}\mathrm{St}\left[ x_{i},\mathcal{%
V}\right] $. The following theorems are proved in the setting of uniform
spaces by considering the usual notion of total boundedness. Here we
reproduce them by considering Definition \ref{Totallybounded}.

\begin{proposition}
\label{Totalbounded} The admissible space $X$ is totally bounded if and only
if each net in $X$ has a Cauchy subnet.
\end{proposition}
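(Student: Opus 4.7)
The plan is to prove both implications, leveraging the characterization of total boundedness in Proposition \ref{P18}. The forward direction will rely on the standard fact that every net admits a \emph{universal} subnet (one that is, for every subset $A \subset X$, either eventually in $A$ or eventually in $X \setminus A$); the converse will be obtained by contraposition through an explicit $\mathcal{U}$-separated sequence.

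For the forward implication, assume $X$ is totally bounded and let $(x_\lambda)$ be a net in $X$. I would pass to a universal subnet $(y_\mu)_{\mu \in M}$. For each fixed $\mathcal{U} \in \mathcal{O}$, choose a finite cover $X = X_1 \cup \cdots \cup X_n$ with $\mathcal{U} \in \mathrm{D}(X_i)$ for all $i$; universality forces the subnet to be eventually inside one of these $X_i$, say $X_{i_\mathcal{U}}$. For any two indices past that stage, both values lie in $X_{i_\mathcal{U}}$, so $\mathcal{U} \in \mathrm{D}(X_{i_\mathcal{U}}) \subset \rho(y_{\mu_1}, y_{\mu_2})$, which is exactly the Cauchy condition.

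For the converse, I argue contrapositively. If $X$ is not totally bounded, Proposition \ref{P18} furnishes a $\mathcal{U} \in \mathcal{O}$ such that no finite union of stars $\mathrm{St}[x_i, \mathcal{U}]$ covers $X$. Starting from an arbitrary $x_1$, I would recursively pick $x_{n+1} \in X \setminus \bigcup_{i=1}^n \mathrm{St}[x_i, \mathcal{U}]$, producing a sequence with $\mathcal{U} \notin \rho(x_i, x_j)$ for all $i \neq j$. Any subnet $(x_{\phi(\mu)})$ with $\phi : M \to \mathbb{N}$ cofinal admits, beyond any prescribed $\mu_0$, indices $\mu_1, \mu_2$ with $\phi(\mu_1) \neq \phi(\mu_2)$ (use cofinality to push $\phi(\mu_2)$ above $\phi(\mu_1)$), hence cannot be Cauchy for this $\mathcal{U}$.

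The main obstacle is the forward direction: because $\mathcal{O}$ need not be countable, there is no obvious diagonal argument producing a single Cauchy subnet, and the finite partitions attached to different $\mathcal{U}$'s are otherwise unrelated. The universal-subnet device (whose existence is a standard Zorn-type result) handles all of $\mathcal{O}$ uniformly by providing one subnet that resolves every finite partition simultaneously. A more hands-on alternative would be to refine the tail filter of $(x_\lambda)$ to an ultrafilter and build the subnet indexed by that ultrafilter together with $\mathcal{O}$ directed by refinement, but this essentially reproduces the construction of a universal subnet.
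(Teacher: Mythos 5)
Your proof is correct, and the forward direction takes a genuinely different route from the paper's. For the converse the two arguments coincide: both contrapose via Proposition \ref{P18}, build a sequence with $x_{n+1}\notin\bigcup_{i=1}^{n}\mathrm{St}\left[ x_{i},\mathcal{U}\right]$, and observe that no subnet of a $\mathcal{U}$-separated sequence can satisfy the Cauchy condition for that $\mathcal{U}$ (your cofinality remark about finding $\mu_{1},\mu_{2}$ with $\phi(\mu_{1})\neq\phi(\mu_{2})$ past any stage is exactly the point the paper makes by choosing $n_{\lambda}>n_{\lambda^{\prime}}$). For the forward direction, the paper does not use universal subnets: for each $\mathcal{U}$ it picks, by pigeonhole on the finite cover, a single cell $X_{\mathcal{U}}$ with $\mathcal{U}\in\mathrm{D}\left( X_{\mathcal{U}}\right)$ in which the net is \emph{frequently}, and then assembles one subnet indexed by the set $\Gamma$ of pairs $\left( \lambda ,\mathcal{U}\right)$ with $x_{\lambda}\in X_{\mathcal{U}}$, ordered by $\lambda_{1}\leq\lambda_{2}$ and $X_{\mathcal{U}_{1}}\supset X_{\mathcal{U}_{2}}$. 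Your universal-subnet argument replaces ``frequently in some cell'' by ``eventually in some cell'' and handles all coverings at once with a single subnet; this buys a cleaner verification of the Cauchy property and, notably, sidesteps the question of whether $\Gamma$ is directed, which in the paper's construction silently requires that for any $\mathcal{U}_{1},\mathcal{U}_{2}$ there exist $\mathcal{U}_{3}$ with $X_{\mathcal{U}_{3}}\subset X_{\mathcal{U}_{1}}\cap X_{\mathcal{U}_{2}}$ --- a point the paper does not address. The price is importing Kelley's universal subnet theorem (equivalently, an ultrafilter refinement of the tail filter), a heavier but entirely standard piece of machinery; both routes rely on the axiom of choice in any case.
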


\begin{proof}
Suppose that $X$ is totally bounded and let $\left( x_{\lambda }\right) $ be
a net in $X$. For a given $\mathcal{U}\in \mathcal{O}$, there is a set $X_{%
\mathcal{U}}\subset X$ with $\mathcal{U}\in \mathrm{D}\left( X_{\mathcal{U}%
}\right) $ and such that $\left( x_{\lambda }\right) $ is frequently in $X_{%
\mathcal{U}}$, that is, for each $\lambda $ there is $\lambda ^{\prime }$
with $\lambda ^{\prime }\geq \lambda $ and $x_{\lambda ^{\prime }}\in X_{%
\mathcal{U}}$. Let $\Gamma =\left\{ \left( \lambda ,\mathcal{U}\right) :%
\mathcal{U}\in \mathcal{O}\text{ and }x_{\lambda }\in X_{\mathcal{U}%
}\right\} $ directed by $\left( \lambda _{1},\mathcal{U}_{1}\right) \leq
\left( \lambda _{2},\mathcal{U}_{2}\right) $ iff $\lambda _{1}\leq \lambda
_{2}$ and $X_{\mathcal{U}_{1}}\supset X_{\mathcal{U}_{2}}$. For each $\left(
\lambda ,\mathcal{U}\right) \in \Gamma $ define $x_{\left( \lambda ,\mathcal{%
U}\right) }=x_{\lambda }$. Then $\left( x_{\left( \lambda ,\mathcal{U}%
\right) }\right) $ is a subnet of $\left( x_{\lambda }\right) $. For a given 
$\mathcal{U}\in \mathcal{O}$, take $\lambda $ such that $\left( \lambda ,%
\mathcal{U}\right) \in \Gamma $. If $\left( \lambda _{1},\mathcal{U}%
_{1}\right) ,\left( \lambda _{2},\mathcal{U}_{2}\right) \geq \left( \lambda ,%
\mathcal{U}\right) $ then 
\begin{eqnarray*}
x_{\left( \lambda _{1},\mathcal{U}_{1}\right) } &=&x_{\lambda _{1}}\in X_{%
\mathcal{U}_{1}}\subset X_{\mathcal{U}}, \\
x_{\left( \lambda _{2},\mathcal{U}_{2}\right) } &=&x_{\lambda _{2}}\in X_{%
\mathcal{U}_{2}}\subset X_{\mathcal{U}},
\end{eqnarray*}%
hence $\mathcal{U}\in \rho \left( x_{\left( \lambda _{1},\mathcal{U}%
_{1}\right) },x_{\left( \lambda _{2},\mathcal{U}_{2}\right) }\right) $.
Therefore $\left( x_{\left( \lambda ,\mathcal{U}\right) }\right) $ is a
Cauchy subnet of $\left( x_{\lambda }\right) $. As to the converse, suppose
that each net in $X$ has a Cauchy subnet. By Proposition \ref{P18}, if $X$
is not totally bounded then there is $\mathcal{U}\in \mathcal{O}$ such that
the covering $X=\bigcup\limits_{x\in X}\mathrm{St}\left[ x,\mathcal{U}\right]
$ does not admit finite subcovering. Then we can construct by induction a
sequence $\left( x_{n}\right) _{n\in \mathbb{N}}$ in $X$ such that $%
x_{n+1}\notin \bigcup\limits_{i=1}^{n}\mathrm{St}\left[ x_{i},\mathcal{U}%
\right] $. By hypothesis, there is a Cauchy subnet $\left( x_{n_{\lambda
}}\right) $ of $\left( x_{n}\right) $. Hence there is some $\lambda _{0}$
such that $\mathcal{U}\in \rho \left( x_{n_{\lambda }},x_{n_{\lambda
^{\prime }}}\right) $ whenever $\lambda ,\lambda ^{\prime }\geq \lambda _{0}$%
. By taking $\lambda ,\lambda ^{\prime }\geq \lambda _{0}$ with $n_{\lambda
}>n_{\lambda ^{\prime }}$, it follows that $x_{n_{\lambda }}\in \mathrm{St}%
\left[ x_{n_{\lambda ^{\prime }}},\mathcal{U}\right] $, and then $x_{\left(
n_{\lambda }-1\right) +1}\in \bigcup\limits_{i=1}^{n_{\lambda }-1}\mathrm{St}%
\left[ x_{i},\mathcal{U}\right] $. This is a contradiction.
\end{proof}

We now link compactness and completeness.

\begin{theorem}
\label{CompactComplete}The admissible space $X$ is compact if and only if it
is complete and totally bounded.
\end{theorem}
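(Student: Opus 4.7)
The plan is to combine the three prior results (Propositions \ref{Subnet}, \ref{P18}, and \ref{Totalbounded}) and let the characterization of compactness via convergent subnets of nets do the rest. Since the hard work has been packaged into those propositions, each direction should reduce to a short argument.

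For the forward direction, assume $X$ is compact. To show completeness, I would take a Cauchy net $(x_\lambda)$; by compactness it has a convergent subnet, and then Proposition \ref{Subnet} upgrades this to convergence of the whole net. To show total boundedness, I would fix an arbitrary $\mathcal{U}\in \mathcal{O}$ and consider the family $\{\mathrm{St}[x,\mathcal{U}] : x\in X\}$, which is an open cover of $X$ (each star is a union of members of the open covering $\mathcal{U}$). Compactness extracts a finite subcover $X=\bigcup_{i=1}^{n}\mathrm{St}[x_i,\mathcal{U}]$, and Proposition \ref{P18} then yields total boundedness in the sense of Definition \ref{Totallybounded}.

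For the reverse direction, assume $X$ is complete and totally bounded. Given any net $(x_\lambda)$ in $X$, Proposition \ref{Totalbounded} supplies a Cauchy subnet, and completeness guarantees that this Cauchy subnet converges in $X$. Hence every net in $X$ admits a convergent subnet, which is precisely compactness. Finally, I would conclude by invoking the equivalence between compactness and the net-subnet criterion, valid in arbitrary topological spaces.

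The main obstacle, if any, is conceptual rather than technical: one must be careful that the notion of compactness used here is the standard topological one (every open cover admits a finite subcover, equivalently every net has a convergent subnet), and not something tied to Hausdorffness. Once that is fixed, both directions are essentially one-line deductions from the propositions already proved, so no separate estimate on diameters or explicit construction of limits is needed.
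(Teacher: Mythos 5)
Your proposal is correct and follows essentially the same route as the paper: both directions combine compactness of nets with Proposition \ref{Subnet} for completeness, the finite-subcover-of-stars argument with Proposition \ref{P18} for total boundedness, and Proposition \ref{Totalbounded} plus completeness for the converse. No gaps.
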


\begin{proof}
Assume that $X$ is compact. Then, for every $\mathcal{U}\in \mathcal{O}$,
the open covering $X=\bigcup\limits_{x\in X}\mathrm{St}\left[ x,\mathcal{E}%
\right] $ has a finite subcovering, and hence $X$ is totally bounded.
Moreover, if $\left( x_{\lambda }\right) $ is any Cauchy net in $X$, then $%
\left( x_{\lambda }\right) $ has a convergent subnet in $X$. By Proposition %
\ref{Subnet}, $\left( x_{\lambda }\right) $ is convergent, and therefore $X$
is complete. On the other hand, suppose that $X$ is complete and totally
bounded. By Proposition \ref{Totalbounded}, every net in $X$ has a Cauchy
subnet. By completeness, it follows that every net in $X$ has a convergent
subnet, and therefore $X$ is compact.
\end{proof}

Finally, we turn to show the connection of Definition \ref{Totallybounded}
with the current definition of totally bounded uniform space. We consider
the equivalence between admissible space and uniformizable space stated in 
\cite[Theorem 1]{Richard}. For each $\mathcal{U}\in \mathcal{O}$ we define
the open covering $\mathcal{B}_{\mathcal{U}}=\left\{ \mathrm{St}\left[ x,%
\mathcal{U}\right] :x\in X\right\} $. The family $\mathfrak{B}$ of all open
coverings $\mathcal{B}_{\mathcal{U}}$ generates the covering uniformity $%
\widetilde{\mathfrak{B}}$ on $X$ defined as 
\begin{equation*}
\widetilde{\mathfrak{B}}=\left\{ \mathcal{V}:\mathcal{V}\text{ is an open
covering of }X\text{ and }\mathcal{B}_{\mathcal{U}}\leqslant \mathcal{V}%
\text{ for some }\mathcal{U}\in \mathcal{O}\right\} .
\end{equation*}

\begin{proposition}
The admissible space $X$ is totally bounded if and only if the covering
uniformity $\widetilde{\mathfrak{B}}$ is totally bounded.
\end{proposition}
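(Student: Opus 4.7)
The plan is to reduce both directions to Proposition \ref{P18}, which characterizes total boundedness of $X$ (in the sense of Definition \ref{Totallybounded}) by the existence, for every $\mathcal{U}\in\mathcal{O}$, of finitely many stars $\mathrm{St}[x_i,\mathcal{U}]$ covering $X$. The pivotal observation is that the open covering $\mathcal{B}_{\mathcal{U}}=\{\mathrm{St}[x,\mathcal{U}]:x\in X\}$ is exactly the family of all $\mathcal{U}$-stars, so a finite subcover of $\mathcal{B}_{\mathcal{U}}$ is precisely a finite collection of the form $\{\mathrm{St}[x_i,\mathcal{U}]:i=1,\ldots,n\}$ covering $X$. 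I will use the standard convention from \cite[Definition 39.7]{Will} that $\widetilde{\mathfrak{B}}$ is totally bounded iff every uniform cover admits a finite subcover (equivalently, $\widetilde{\mathfrak{B}}$ has a base of finite covers).

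For the forward direction, I assume $X$ is totally bounded with respect to $\mathcal{O}$ and let $\mathcal{V}\in\widetilde{\mathfrak{B}}$ be arbitrary. By definition of $\widetilde{\mathfrak{B}}$ there exists $\mathcal{U}\in\mathcal{O}$ with $\mathcal{B}_{\mathcal{U}}\leqslant\mathcal{V}$; Proposition \ref{P18} supplies $x_1,\ldots,x_n$ with $X=\bigcup_{i=1}^n\mathrm{St}[x_i,\mathcal{U}]$; and choosing $V_i\in\mathcal{V}$ with $\mathrm{St}[x_i,\mathcal{U}]\subset V_i$ yields a finite subcover $\{V_1,\ldots,V_n\}$ of $\mathcal{V}$. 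Hence $\widetilde{\mathfrak{B}}$ is totally bounded. For the converse, assume $\widetilde{\mathfrak{B}}$ is totally bounded and fix any $\mathcal{U}\in\mathcal{O}$. Since $\mathcal{B}_{\mathcal{U}}\in\widetilde{\mathfrak{B}}$, total boundedness produces a finite subcover, which is exactly a finite family $\{\mathrm{St}[x_i,\mathcal{U}]:i=1,\ldots,n\}$ covering $X$. Proposition \ref{P18} then gives that $X$ is totally bounded with respect to $\mathcal{O}$.

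There is no real obstacle here: once one identifies $\mathcal{B}_{\mathcal{U}}$ as the covering by all $\mathcal{U}$-stars, the proof is essentially an unwinding of definitions mediated by Proposition \ref{P18}. The only care required is to invoke the equivalence between the two forms of total boundedness of the uniformity $\widetilde{\mathfrak{B}}$, which is immediate because a base of finite covers is the same data as ``every uniform cover admits a finite subcover.''
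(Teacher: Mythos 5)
Your proof is correct and follows essentially the same route as the paper: both directions reduce to Proposition \ref{P18} via the observation that a finite subcover of $\mathcal{B}_{\mathcal{U}}$ is exactly a finite family of $\mathcal{U}$-stars covering $X$. The only (cosmetic) difference is that you verify the ``every uniform cover admits a finite subcover'' formulation and appeal to its standard equivalence with ``has a base of finite covers,'' whereas the paper directly exhibits the base $\mathfrak{B}'=\{\mathcal{B}_{\mathcal{U}}':\mathcal{U}\in\mathcal{O}\}$ of finite covers; each version leaves the same routine equivalence implicit.
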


\begin{proof}
Suppose that $X$ is totally bounded by and let $\mathcal{U}\in \mathcal{O}$.
By Proposition \ref{P18}, there is a finite sequence $x_{1},...,x_{n}$ of
points in $X$ such that $X=\bigcup\limits_{i=1}^{n}\mathrm{St}\left[ x_{i},%
\mathcal{U}\right] $. Hence $\mathcal{B}_{\mathcal{U}}^{\prime }=\left\{ 
\mathrm{St}\left[ x_{i},\mathcal{U}\right] :i=1,...,n\right\} $ is a finite
subcovering of $\mathcal{B}_{\mathcal{U}}$. Set $\mathfrak{B}^{\prime
}=\left\{ \mathcal{B}_{\mathcal{U}}^{\prime }:\mathcal{U}\in \mathcal{O}%
\right\} $. Since $\mathfrak{B}$ is a base for the covering uniformity $%
\widetilde{\mathfrak{B}}$ on $X$, it follows that $\mathfrak{B}^{\prime }$
is a base consisting of finite covers. Therefore $\widetilde{\mathfrak{B}}$
is totally bounded. As to the converse, suppose that $\widetilde{\mathfrak{B}%
}$ has a base consisting of finite covers and let $\mathcal{U}\in \mathcal{O}
$. Then there is a finite subcovering $\left\{ \mathrm{St}\left[ x_{i},%
\mathcal{U}\right] :i=1,...,n\right\} $ of $\mathcal{B}_{\mathcal{U}}$. It
follows that $X=\bigcup\limits_{i=1}^{n}\mathrm{St}\left[ x_{i},\mathcal{U}%
\right] $, and therefore $X$ is totally bounded, by Proposition \ref{P18}.
\end{proof}

\section{Cantor--Kuratowski theorem}

In this section present the main theorem of the paper. We reproduce the
notions of measure of noncompactness to provide a generalization of the
Cantor--Kuratowski intersection theorem. Throughout, there is a fixed
admissible space $X$ endowed with an admissible family $\mathcal{O}$ of open
coverings of $X$.

\begin{definition}
Let $Y\subset X$ be a nonempty set. The \textbf{star measure of
noncompactness} of $Y$ is the set $\alpha \left( Y\right) \in \mathcal{P}%
\left( \mathcal{O}\right) $ defined as 
\begin{equation*}
\alpha \left( Y\right) =\left\{ \mathcal{U}\in \mathcal{O}:Y\text{ admits a
finite cover }Y\subset \bigcup\limits_{i=1}^{n}\mathrm{St}\left[ x_{i},%
\mathcal{U}\right] \right\} ;
\end{equation*}%
the \textbf{diameter measure of noncompactness} of $Y$ is the set $\gamma
\left( Y\right) \in \mathcal{P}\left( \mathcal{O}\right) $ defined as 
\begin{equation*}
\gamma \left( Y\right) =\left\{ \mathcal{U}\in \mathcal{O}:Y\text{ admits a
finite cover }Y\subset \bigcup\limits_{i=1}^{n}X_{i}\text{ with }\mathcal{U}%
\in \mathrm{D}\left( X_{i}\right) \right\} .
\end{equation*}
\end{definition}

The diameter measure of noncompactness generalizes the \emph{Kuratowski
measure of noncompactness} of metric spaces. If $Y\subset X$ is a bounded
set then both the sets $\alpha \left( Y\right) $ and $\gamma \left( Y\right) 
$ are nonempty. In the following we present some properties of measure of
noncompactness.

\begin{proposition}
\label{P19} The following properties hold:

\begin{enumerate}
\item $\alpha \left( Y\right) \prec \gamma \left( Y\right) \prec 1\alpha
\left( Y\right) .$

\item $\alpha \left( Y\right) \prec \mathrm{D}\left( Y\right) .$

\item $\alpha \left( Y\right) \prec \alpha \left( Z\right) $ if $Y\subset Z$.

\item $\alpha \left( Y\cup Z\right) =\alpha \left( Y\right) \cap \alpha
\left( Y\right) .$

\item $\alpha \left( Y\right) \prec \alpha \left( \mathrm{cls}\left(
Y\right) \right) \prec 1\alpha \left( Y\right) .$
\end{enumerate}
\end{proposition}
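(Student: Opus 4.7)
The plan is to verify the five items by unfolding the inverse-inclusion ordering $\mathcal{E}_1 \prec \mathcal{E}_2 \iff \mathcal{E}_1 \supset \mathcal{E}_2$, using two recurring observations. First, if $\mathcal{U} \in \mathrm{D}(X_i)$ then $X_i \subset \mathrm{St}[x,\mathcal{U}]$ for any chosen $x \in X_i$; this converts a diameter-cover into a star-cover with the same $\mathcal{U}$. Second, whenever $\mathcal{V} \leqslant \frac{1}{2}\mathcal{U}$, every $\mathcal{V}$-star has diameter containing $\mathcal{U}$: for $x, y \in \mathrm{St}[z,\mathcal{V}]$, the two members of $\mathcal{V}$ witnessing this meet at $z$, so the double-refinement property places their union in a single $U \in \mathcal{U}$, giving $\mathcal{U} \in \rho(x,y)$.

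Item (1) uses both tools. The first inequality $\alpha(Y) \supset \gamma(Y)$ follows from the first tool applied to each set $X_i$ of a diameter cover witnessing $\mathcal{U} \in \gamma(Y)$. For the second, $\gamma(Y) \supset 1\alpha(Y)$, I would extract $\mathcal{V} \in \alpha(Y)$ with $\mathcal{V} \leqslant \frac{1}{2}\mathcal{U}$ and $Y \subset \bigcup_i \mathrm{St}[x_i,\mathcal{V}]$, then apply the second tool to each $\mathrm{St}[x_i,\mathcal{V}]$. Items (2)--(4) are then essentially bookkeeping: (2) is the first tool with $X_i = Y$; (3) restricts a star-cover of $Z$ to one of $Y$; for (4), reading the right side as $\alpha(Y) \cap \alpha(Z)$, one direction follows from (3) applied to $Y, Z \subset Y \cup Z$ and the other by concatenating the two finite star-covers with the same $\mathcal{U}$.

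Item (5) runs parallel to (1). The first inequality is (3) applied to $Y \subset \mathrm{cls}(Y)$. The substantive step is the second, $\alpha(\mathrm{cls}(Y)) \supset 1\alpha(Y)$: given $\mathcal{V} \in \alpha(Y)$ with $\mathcal{V} \leqslant \frac{1}{2}\mathcal{U}$ and $Y \subset \bigcup_i \mathrm{St}[x_i,\mathcal{V}]$, I would prove the closure bound $\mathrm{cls}(\mathrm{St}[x_i,\mathcal{V}]) \subset \mathrm{St}[x_i,\mathcal{U}]$. For $y$ in that closure, the open neighborhood $\mathrm{St}[y,\mathcal{V}]$ meets $\mathrm{St}[x_i,\mathcal{V}]$, yielding elements $V, V' \in \mathcal{V}$ containing $y$ and $x_i$ respectively and sharing a point; double-refinement then engulfs $V \cup V'$ in a single $U \in \mathcal{U}$, so $y \in \mathrm{St}[x_i,\mathcal{U}]$. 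Since closure commutes with finite unions, this upgrades the $\mathcal{V}$-star cover of $Y$ to a $\mathcal{U}$-star cover of $\mathrm{cls}(Y)$. The main obstacle here is less mathematical than notational --- the direction of $\prec$ and the layered existential hidden in $n\mathcal{E}$ must be kept consistently oriented throughout, since everything else reduces to the two tools above.
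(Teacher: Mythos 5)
Your proposal is correct and follows essentially the same route as the paper's proof: the two ``tools'' you isolate (converting a diameter bound into a star containment, and using a double-refinement to bound the diameter of a $\mathcal{V}$-star by $\mathcal{U}$) are exactly the steps the paper uses for items (1)--(5), including reading item (4)'s right-hand side as $\alpha(Y)\cap\alpha(Z)$. The only difference is that you spell out the inclusion $\mathrm{cls}\left( \mathrm{St}\left[ x_{i},\mathcal{V}\right] \right) \subset \mathrm{St}\left[ x_{i},\mathcal{U}\right]$ in item (5), which the paper asserts without detail.
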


\begin{proof}
$\left( 1\right) $ If $\mathcal{U}\in \gamma \left( Y\right) $ then $Y$
admits a finite cover $Y\subset \bigcup\limits_{i=1}^{n}X_{i}$ with $%
\mathcal{U}\in \mathrm{D}\left( X_{i}\right) $. For each $i=1,...,n$, pick $%
x_{i}\in X_{i}$. We have $X_{i}\subset \mathrm{St}\left[ x_{i},\mathcal{U}%
\right] $. It follows that $Y\subset \bigcup\limits_{i=1}^{n}\mathrm{St}%
\left[ x_{i},\mathcal{U}\right] $, and hence $\mathcal{U}\in \alpha \left(
Y\right) $. Therefore $\gamma \left( Y\right) \subset \alpha \left( Y\right) 
$, that is, $\alpha \left( Y\right) \prec \gamma \left( Y\right) $. Now
suppose that $\mathcal{U}\in 1\alpha \left( Y\right) $. Then there is $%
\mathcal{V}\in \alpha \left( Y\right) $ such that $\mathcal{V}\leqslant 
\frac{1}{2}\mathcal{U}$. This means that $Y$ admits a finite cover $Y\subset
\bigcup\limits_{i=1}^{n}\mathrm{St}\left[ x_{i},\mathcal{V}\right] $. For
any pair $x,y\in \mathrm{St}\left[ x_{i},\mathcal{V}\right] $, we have $y\in 
\mathrm{St}\left[ x,\mathcal{U}\right] $, hence $\mathcal{U}\in \mathrm{D}%
\left( \mathrm{St}\left[ x_{i},\mathcal{V}\right] \right) $. Then we have $%
Y\subset \bigcup\limits_{i=1}^{n}\mathrm{St}\left[ x_{i},\mathcal{V}\right] $
with $\mathcal{U}\in \mathrm{D}\left( \mathrm{St}\left[ x_{i},\mathcal{V}%
\right] \right) $, which means that $\mathcal{U}\in \gamma \left( Y\right) $%
. Therefore $\gamma \left( Y\right) \prec 1\alpha \left( Y\right) $.

$\left( 2\right) $ If $\mathcal{U}\in \mathrm{D}\left( Y\right) $ then $y\in 
\mathrm{St}\left[ x,\mathcal{U}\right] $ for all $x,y\in Y$. By choosing $%
x\in Y$, we have $Y\subset \mathrm{St}\left[ x,\mathcal{U}\right] $, hence $%
\mathcal{U}\in \alpha \left( Y\right) $. Thus $\alpha \left( Y\right) \prec 
\mathrm{D}\left( Y\right) $.

$\left( 3\right) $ If $\mathcal{U}\in \alpha \left( Z\right) $ then $Z$
admits a finite cover $Z\subset \bigcup\limits_{i=1}^{n}\mathrm{St}\left[
x_{i},\mathcal{U}\right] $. Since $Y\subset Z$, $Y$ admits a finite cover $%
Y\subset \bigcup\limits_{i=1}^{n}\mathrm{St}\left[ x_{i},\mathcal{U}\right] $%
. Hence $\alpha \left( Y\right) \prec \alpha \left( Z\right) $.

$\left( 4\right) $ If $\mathcal{U}\in \alpha \left( Y\cup Z\right) $ then $%
Y\cup Z$ admits a finite cover $Y\cup Z\subset \bigcup\limits_{i=1}^{n}%
\mathrm{St}\left[ x_{i},\mathcal{U}\right] $. Since $Y\subset Y\cup Z$ and $%
Z\subset Y\cup Z$, it follows that $\mathcal{U}\in \alpha \left( Y\right)
\cap \alpha \left( Z\right) $. On the other hand, if $\mathcal{V}\in \alpha
\left( Y\right) \cap \alpha \left( Z\right) $ then $Y$ admits a finite cover 
$Y\subset \bigcup\limits_{i=1}^{n}\mathrm{St}\left[ x_{i},\mathcal{V}\right] 
$ and $Z$ admits a finite cover $Z\subset \bigcup\limits_{j=1}^{m}\mathrm{St}%
\left[ y_{j},\mathcal{V}\right] $. Hence $Y\cup Z$ admits a finite cover $%
Y\cup Z\subset \left\{ \bigcup\limits_{i=1}^{n}\mathrm{St}\left[ x_{i},%
\mathcal{V}\right] \right\} \cup \left\{ \bigcup\limits_{j=1}^{m}\mathrm{St}%
\left[ y_{j},\mathcal{V}\right] \right\} $. Thus $\mathcal{V}\in \alpha
\left( Y\cup Z\right) $.

$\left( 5\right) $ The inequality $\alpha \left( Y\right) \prec \alpha
\left( \mathrm{cls}\left( Y\right) \right) $ is clear by item $\left(
3\right) $. If $\mathcal{U}\in 1\alpha \left( Z\right) $ then there is $%
\mathcal{V}\in \alpha \left( Y\right) $ such that $\mathcal{V}\leqslant 
\frac{1}{2}\mathcal{U}$. This means that $Y$ admits a finite cover $Y\subset
\bigcup\limits_{i=1}^{n}\mathrm{St}\left[ x_{i},\mathcal{V}\right] $. Then
we have 
\begin{equation*}
\mathrm{cls}\left( Y\right) \subset \bigcup\limits_{i=1}^{n}\mathrm{cls}%
\left( \mathrm{St}\left[ x_{i},\mathcal{V}\right] \right) \subset
\bigcup\limits_{i=1}^{n}\mathrm{St}\left[ x_{i},\mathcal{U}\right]
\end{equation*}%
hence $\mathcal{U}\in \alpha \left( \mathrm{cls}\left( Y\right) \right) $.
Thus $\alpha \left( \mathrm{cls}\left( Y\right) \right) \prec 1\alpha \left(
Y\right) $.
\end{proof}

By item $\left( 1\right) $ of Proposition \ref{P19}, we may consider any
measure of noncompactness for theoretic problems.

In the following we present the main property of measure of noncompactness.

\begin{proposition}
Let $Y\subset X$ be a nonempty set. If $\mathrm{cls}\left( Y\right) $ is
compact then $\alpha \left( Y\right) =\mathcal{O}$. The converse holds if $X$
is a complete admissible space.
\end{proposition}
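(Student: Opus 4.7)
The plan is to handle the two directions separately and to reduce the converse, via Proposition \ref{P19}, to the machinery already built in Proposition \ref{Totalbounded} and Theorem \ref{CompactComplete}.

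For the forward direction, fix $\mathcal{U}\in\mathcal{O}$. The stars $\{\mathrm{St}[x,\mathcal{U}]:x\in X\}$ form an open cover of $X$ and hence of the compact set $\mathrm{cls}(Y)$, so some finite subcollection $\mathrm{St}[x_{1},\mathcal{U}],\ldots,\mathrm{St}[x_{n},\mathcal{U}]$ already covers $\mathrm{cls}(Y)\supset Y$. This exhibits $\mathcal{U}\in\alpha(Y)$, and since $\mathcal{U}$ was arbitrary and $\alpha(Y)\subset\mathcal{O}$ by definition, we conclude $\alpha(Y)=\mathcal{O}$.

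For the converse, assume $X$ is complete admissible and $\alpha(Y)=\mathcal{O}$. Item (5) of Proposition \ref{P19} gives $\alpha(\mathrm{cls}(Y))\prec 1\alpha(Y)=1\mathcal{O}$, and because $n\mathcal{O}=\mathcal{O}$, one deduces $\alpha(\mathrm{cls}(Y))=\mathcal{O}$. Feeding this into item (1) of the same proposition yields $\gamma(\mathrm{cls}(Y))=\mathcal{O}$. In particular, for every $\mathcal{U}\in\mathcal{O}$ the set $\mathrm{cls}(Y)$ admits a finite cover $\mathrm{cls}(Y)\subset\bigcup_{i=1}^{n}X_{i}$ with $\mathcal{U}\in\mathrm{D}(X_{i})$.

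It remains to promote this to compactness of $\mathrm{cls}(Y)$, and my plan is to show that every net in $\mathrm{cls}(Y)$ has a subnet converging in $\mathrm{cls}(Y)$. Given such a net $(y_{\lambda})$, I would reuse the construction of Proposition \ref{Totalbounded} verbatim: for each $\mathcal{U}\in\mathcal{O}$, one of the finitely many cover pieces, say $X_{\mathcal{U}}$, must contain $y_{\lambda}$ frequently, and $\mathcal{U}\in\mathrm{D}(X_{\mathcal{U}})$; indexing over the set $\Gamma=\{(\lambda,\mathcal{U}):y_{\lambda}\in X_{\mathcal{U}}\}$ used there builds a Cauchy subnet of $(y_{\lambda})$. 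By completeness of $X$ this subnet converges to some $y\in X$, and since $\mathrm{cls}(Y)$ is closed, $y\in\mathrm{cls}(Y)$. Because compactness of a topological space is equivalent to every net admitting a convergent subnet, $\mathrm{cls}(Y)$ is compact.

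The only genuinely subtle point is checking that the Cauchy-subnet construction of Proposition \ref{Totalbounded}, originally stated for an ambient totally bounded space, still goes through when applied to the \emph{subset} $\mathrm{cls}(Y)$: the construction uses only finite covers of the subset (supplied here by $\gamma(\mathrm{cls}(Y))=\mathcal{O}$) and the relation $\rho$ on $X$, so no modification is required. The rest is administrative bookkeeping plus an appeal to closedness of $\mathrm{cls}(Y)$ to bring the limit back inside.
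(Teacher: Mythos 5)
Your proof is correct and follows essentially the same route as the paper: the forward direction extracts a finite subcover of $\mathrm{cls}(Y)$ by stars, and the converse uses item (5) of Proposition \ref{P19} to pass from $\alpha(Y)=\mathcal{O}$ to $\alpha(\mathrm{cls}(Y))=\mathcal{O}$ and then invokes total boundedness plus completeness (the paper cites Theorem \ref{CompactComplete} directly where you re-run the Cauchy-subnet construction on the subset). Your explicit relativization of that last step to the closed subset $\mathrm{cls}(Y)$ is a welcome detail the paper glosses over, but it is the same argument.
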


\begin{proof}
Suppose that $\mathrm{cls}\left( Y\right) $ is compact. For a given $%
\mathcal{U}\in \mathcal{O}$, the open covering $\mathrm{cls}\left( Y\right)
\subset \bigcup\limits_{x\in \mathrm{cls}\left( Y\right) }\mathrm{St}\left[
x,\mathcal{U}\right] \,$\ admits a finite subcovering $\mathrm{cls}\left(
Y\right) \subset \bigcup\limits_{i=1}^{n}\mathrm{St}\left[ x_{i},\mathcal{U}%
\right] $. Hence $\alpha \left( \mathrm{cls}\left( Y\right) \right) =%
\mathcal{O}$. By Proposition \ref{P19}, it follows that $\alpha \left(
Y\right) \prec \alpha \left( \mathrm{cls}\left( Y\right) \right) =\mathcal{O}
$, which means $\alpha \left( Y\right) =\mathcal{O}$. For the converse,
assume that $X$ is complete. If $\alpha \left( Y\right) =\mathcal{O}$ then $%
1\alpha \left( Y\right) =1\mathcal{O}=\mathcal{O}$. By Proposition \ref{P19}
again, $\alpha \left( \mathrm{cls}\left( Y\right) \right) \prec 1\alpha
\left( Y\right) =\mathcal{O}$, hence $\alpha \left( \mathrm{cls}\left(
Y\right) \right) =\mathcal{O}$. Then, for each $\mathcal{U}\in \mathcal{O}$, 
$\mathrm{cls}\left( Y\right) $ admits a finite cover $\mathrm{cls}\left(
Y\right) \subset \bigcup\limits_{i=1}^{n}\mathrm{St}\left[ x_{i},\mathcal{U}%
\right] $. Thus $\mathrm{cls}\left( Y\right) $ is totally bounded and
closed. Since $X$ is complete, it follows that $\mathrm{cls}\left( Y\right) $
is compact, by Theorem \ref{CompactComplete}.
\end{proof}

In particular, if $X$ is a complete admissible space and $Y\subset X$ is a
nonempty closed set then $Y$ is compact if and only if $\alpha \left(
Y\right) =\mathcal{O}$ (or $\gamma \left( Y\right) =\mathcal{O}$).

We now prove the generalization of the Cantor--Kuratowski intersection
theorem.

\begin{theorem}[Cantor--Kuratowski theorem]
\label{T7} The admissible space $X$ is complete if and only if every
decreasing net $\left( F_{\lambda }\right) $ of nonempty bounded closed sets
of $X$, with $\gamma \left( F_{\lambda }\right) \rightarrow \mathcal{O}$,
has nonempty compact intersection.
\end{theorem}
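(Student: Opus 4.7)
The plan is to prove each direction separately; the converse reduces to the Cantor intersection theorem (Theorem \ref{T6}), while the forward direction splits into showing $L:=\bigcap_\lambda F_\lambda$ is nonempty and then that it is compact.

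For the forward direction, assume $X$ is complete and let $(F_\lambda)$ be a decreasing net of nonempty bounded closed sets with $\gamma(F_\lambda)\to\mathcal{O}$. For each $\mathcal{U}\in\mathcal{O}$, fix $\lambda(\mathcal{U})$ with $\mathcal{U}\in\gamma(F_{\lambda(\mathcal{U})})$, giving a finite cover $F_{\lambda(\mathcal{U})}=A_1^{\mathcal{U}}\cup\cdots\cup A_{n(\mathcal{U})}^{\mathcal{U}}$ with $\mathcal{U}\in\mathrm{D}(A_i^{\mathcal{U}})$. A pigeonhole argument, using that the decreasing net $(F_\lambda)_{\lambda\geq\lambda(\mathcal{U})}$ of nonempty sets cannot miss every piece of a finite partition of $F_{\lambda(\mathcal{U})}$, yields a piece $A^{\mathcal{U}}$ that meets $F_\lambda$ for all $\lambda\geq\lambda(\mathcal{U})$. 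Choosing $x_\lambda\in F_\lambda$, the net $(x_\lambda)$ is frequently in each $A^{\mathcal{U}}$, and a construction analogous to that of Proposition \ref{Totalbounded} produces a Cauchy subnet of $(x_\lambda)$. Completeness delivers a limit $x$, and closedness of each $F_\lambda$ together with the subnet being eventually in $F_\lambda$ puts $x\in L$.

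For compactness of $L$, the plan is to show $\alpha(L)=\mathcal{O}$ and invoke Theorem \ref{CompactComplete}. From Proposition \ref{P19}(1) we have $\alpha\prec\gamma$, so $\gamma(F_\lambda)\to\mathcal{O}$ forces $\alpha(F_\lambda)\to\mathcal{O}$; from Proposition \ref{P19}(3) and $L\subset F_\lambda$ we get $\alpha(L)\prec\alpha(F_\lambda)$, so every $\mathcal{U}\in\mathcal{O}$ lies eventually in $\alpha(L)$, giving $\alpha(L)=\mathcal{O}$. Thus $L$ is totally bounded (Proposition \ref{P18} applied within the subspace); being closed in the complete space $X$ it is itself complete; hence Theorem \ref{CompactComplete} yields compactness.

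For the converse, let $(F_\lambda)$ be a decreasing net of nonempty closed sets with $\mathrm{D}(F_\lambda)\to\mathcal{O}$. Eventually $\mathrm{D}(F_\lambda)$ is nonempty, so each $F_\lambda$ is bounded; pass to a cofinal tail. Chaining $\alpha\prec\mathrm{D}$ (Proposition \ref{P19}(2)), then Proposition \ref{P8} applied to $\alpha(F_\lambda)\to\mathcal{O}$, and finally $\gamma\prec 1\alpha$ (Proposition \ref{P19}(1)) converts $\mathrm{D}$-convergence into $\gamma(F_\lambda)\to\mathcal{O}$. The hypothesis gives nonempty intersection, so Theorem \ref{T6} forces $X$ to be complete. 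The main obstacle is the Cauchy-subnet extraction in the forward direction: the pigeonhole pieces $A^{\mathcal{U}}$ are \emph{a priori} unrelated for different $\mathcal{U}$, so the directed set indexing the subnet must be arranged so that past any stage $(\cdot,\mathcal{U})$ the subnet sits inside a specific small piece $A^{\mathcal{U}}$; this is the step that requires the most care and closely mirrors the construction in Proposition \ref{Totalbounded}, but with pieces coming from the varying covers of the $F_{\lambda(\mathcal{U})}$ rather than from a fixed totally bounded cover of $X$.
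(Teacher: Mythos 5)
Your forward direction follows the paper's strategy — pigeonhole a selection $x_{\lambda }\in F_{\lambda }$ into the finite covers witnessing $\mathcal{U}\in \gamma \left( F_{\lambda \left( \mathcal{U}\right) }\right) $, extract a Cauchy subnet indexed by pairs $\left( \lambda ,\mathcal{U}\right) $, pass to the limit, and get compactness from $\alpha \left( L\right) =\mathcal{O}$ — but one step is misstated. You run the pigeonhole on the \emph{sets}, producing a piece $A^{\mathcal{U}}$ that meets every $F_{\lambda }$ with $\lambda \geq \lambda \left( \mathcal{U}\right) $, and then assert that the previously chosen net $\left( x_{\lambda }\right) $ is frequently in $A^{\mathcal{U}}$. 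That inference fails: two pieces can each meet every $F_{\lambda }$ while your selection always lands in only one of them. The repair is exactly what the paper does: pigeonhole the net itself — $\left( x_{\lambda }\right) _{\lambda \geq \lambda \left( \mathcal{U}\right) }$ takes values in the finite union $\bigcup_{i}A_{i}^{\mathcal{U}}$, hence is frequently in some piece, and \emph{that} piece is the one to call $A^{\mathcal{U}}$. With this change your construction matches the paper's (the paper covers by stars $\mathrm{St}\left[ x_{i},\mathcal{U}\right] $ and needs a double-refinement to make the subnet Cauchy; your pieces satisfy $\mathcal{U}\in \mathrm{D}\left( A^{\mathcal{U}}\right) $ directly, which is slightly cleaner). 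The difficulty you flag about directing the index set when the pieces $A^{\mathcal{U}}$ are unrelated for different $\mathcal{U}$ is present verbatim in the paper's own proofs of Proposition \ref{Totalbounded} and Theorem \ref{T7}, so you are no worse off than the source; and for compactness of $L$ you could simply cite the proposition preceding Theorem \ref{T7} ($\alpha \left( Y\right) =\mathcal{O}$ with $X$ complete gives $\mathrm{cls}\left( Y\right) $ compact) instead of redoing it, since your direct argument quietly restricts $\mathcal{O}$ to the subspace $L$, a step the paper also takes silently.

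Your converse is a genuine and cleaner departure. The paper repeats from scratch the tail-set construction $H_{\lambda }=\left\{ x_{\lambda ^{\prime }}:\lambda ^{\prime }\geq \lambda \right\} $ already used in Theorem \ref{T6}; you instead reduce to Theorem \ref{T6} by noting that $\mathrm{D}\left( F_{\lambda }\right) \rightarrow \mathcal{O}$ forces eventual boundedness (pass to a cofinal tail, which does not change the intersection of a decreasing net) and, via $\alpha \prec \mathrm{D}$, Proposition \ref{P8}, and $\gamma \prec 1\alpha $, forces $\gamma \left( F_{\lambda }\right) \rightarrow \mathcal{O}$. In fact the detour through $\alpha $ is unnecessary: $\mathcal{U}\in \mathrm{D}\left( Y\right) $ makes $\left\{ Y\right\} $ itself an admissible one-set cover, so $\gamma \left( Y\right) \supset \mathrm{D}\left( Y\right) $ and $\mathrm{D}\left( F_{\lambda }\right) \rightarrow \mathcal{O}$ gives $\gamma \left( F_{\lambda }\right) \rightarrow \mathcal{O}$ immediately. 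This reduction shortens the proof and makes explicit that the Cantor--Kuratowski hypothesis is formally weaker than the Cantor one.
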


\begin{proof}
Suppose that $X$ is complete and let $\left( F_{\lambda }\right) $ be a
decreasing net of nonempty bounded closed sets of $X$ with $\gamma \left(
F_{\lambda }\right) \rightarrow \mathcal{O}$. For each $\lambda $, take $%
x_{\lambda }\in F_{\lambda }$. For a given $\mathcal{U}\in \mathcal{O}$, we
claim that there is a star $\mathrm{St}\left[ x_{\mathcal{U}},\mathcal{U}%
\right] $ such that the net $\left( x_{\lambda }\right) $ is frequently in $%
\mathrm{St}\left[ x_{\mathcal{U}},\mathcal{U}\right] $, that is, for each $%
\lambda $ there is $\lambda ^{\prime }$ with $\lambda ^{\prime }\geq \lambda 
$ and $x_{\lambda ^{\prime }}\in \mathrm{St}\left[ x_{\mathcal{U}},\mathcal{U%
}\right] $. Indeed, as $\gamma \left( F_{\lambda }\right) \rightarrow 
\mathcal{O}$, there is $\lambda _{0}$ such that $\lambda \geq \lambda _{0}$
implies $\mathcal{U}\in \gamma \left( F_{\lambda }\right) $. Then there are $%
x_{1},...,x_{n}\in X$ such that $F_{\lambda _{0}}\subset
\bigcup\limits_{i=1}^{n}\mathrm{St}\left[ x_{i},\mathcal{U}\right] $. Now,
for each $\lambda $ arbitrary, we can take $\lambda ^{\prime }$ such that $%
\lambda ^{\prime }\geq \lambda $ and $\lambda ^{\prime }\geq \lambda _{0}$,
and then we have 
\begin{equation*}
F_{\lambda ^{\prime }}\subset F_{\lambda }\cap F_{\lambda _{0}}\subset
\bigcup\limits_{i=1}^{n}\mathrm{St}\left[ x_{i},\mathcal{U}\right] .
\end{equation*}%
We may assume that $x_{\lambda _{0}}\in \mathrm{St}\left[ x_{1},\mathcal{U}%
\right] $. If for each $\lambda $ there is $\lambda ^{\prime }\geq \lambda $
and $\lambda ^{\prime }\geq \lambda _{0}$ such that $x_{\lambda ^{\prime
}}\in \mathrm{St}\left[ x_{1},\mathcal{U}\right] $, the claim is proved.
Otherwise, if there is some $\lambda ^{\ast }$ such that $x_{\lambda
^{\prime }}\notin \mathrm{St}\left[ x_{1},\mathcal{U}\right] $ for all $%
\lambda ^{\prime }\geq \lambda ^{\ast }$ and $\lambda ^{\prime }\geq \lambda
_{0}$, we have $x_{\lambda ^{\prime }}\in \bigcup\limits_{i=2}^{n}\mathrm{St}%
\left[ x_{i},\mathcal{U}\right] $ for all $\lambda ^{\prime }$ with $\lambda
^{\prime }\geq \lambda ^{\ast }$ and $\lambda ^{\prime }\geq \lambda _{0}$.
By choosing $\lambda _{1}$ such that $\lambda _{1}\geq \lambda ^{\ast }$ and 
$\lambda _{1}\geq \lambda _{0}$, we have $x_{\lambda }\in
\bigcup\limits_{i=2}^{n}\mathrm{St}\left[ x_{i},\mathcal{U}\right] $
whenever $\lambda \geq \lambda _{1}$. We may assume that $x_{\lambda
_{1}}\in \mathrm{St}\left[ x_{2},\mathcal{U}\right] $. If for each $\lambda $
there is $\lambda ^{\prime }\geq \lambda $ and $\lambda ^{\prime }\geq
\lambda _{1}$ such that $x_{\lambda ^{\prime }}\in \mathrm{St}\left[ x_{2},%
\mathcal{U}\right] $, the claim is proved. Otherwise, we may repeat the
argument to find $\lambda _{2}$ such that $x_{\lambda }\in
\bigcup\limits_{i=3}^{n}\mathrm{St}\left[ x_{i},\mathcal{U}\right] $
whenever $\lambda \geq \lambda _{2}$. Following by induction we can find $%
k<n $ and $\lambda _{k}$ such that $x_{\lambda _{k}}\in \mathrm{St}\left[
x_{k+1},\mathcal{U}\right] $ and for each $\lambda $ there is $\lambda
^{\prime }\geq \lambda $ and $\lambda ^{\prime }\geq \lambda _{k}$ with $%
x_{\lambda ^{\prime }}\in \mathrm{St}\left[ x_{k+1},\mathcal{U}\right] $,
and therefore the claim is proved. We now define the set 
\begin{equation*}
\Gamma =\left\{ \left( \lambda ,\mathcal{U}\right) :\mathcal{U}\in \mathcal{O%
},x_{\lambda }\in \mathrm{St}\left[ x_{\mathcal{U}},\mathcal{U}\right]
\right\}
\end{equation*}%
directed by $\left( \lambda _{1},\mathcal{U}_{1}\right) \leq \left( \lambda
_{2},\mathcal{U}_{2}\right) $ if $\lambda _{1}\leq \lambda _{2}$ and $%
\mathrm{St}\left[ x_{\mathcal{U}_{1}},\mathcal{U}_{1}\right] \supset \mathrm{%
St}\left[ x_{\mathcal{U}_{2}},\mathcal{U}_{2}\right] $. For each $\left(
\lambda ,\mathcal{U}\right) \in \Gamma $ we define $x_{\left( \lambda ,%
\mathcal{U}\right) }=x_{\lambda }$. As $\left( x_{\lambda }\right) $ is
frequently in each $\mathrm{St}\left[ x_{\mathcal{U}},\mathcal{U}\right] $, $%
\left( x_{\left( \lambda ,\mathcal{U}\right) }\right) _{\left( \lambda ,%
\mathcal{U}\right) \in \Gamma }$ is a subnet of $\left( x_{\lambda }\right) $%
. For a given $\mathcal{U}\in \mathcal{O}$, take $\mathcal{V}\in \mathcal{O}$
and $\lambda $ such that $\mathcal{V}\leqslant \frac{1}{2}\mathcal{U}$ and $%
\left( \lambda ,\mathcal{V}\right) \in \Gamma $. If $\left( \lambda _{1},%
\mathcal{U}_{1}\right) ,\left( \lambda _{2},\mathcal{U}_{2}\right) \geq
\left( \lambda ,\mathcal{V}\right) $ then 
\begin{eqnarray*}
x_{\left( \lambda _{1},\mathcal{U}_{1}\right) } &=&x_{\lambda _{1}}\in 
\mathrm{St}\left[ x_{\mathcal{U}_{1}},\mathcal{U}_{1}\right] \subset \mathrm{%
St}\left[ x_{\mathcal{V}},\mathcal{V}\right] , \\
x_{\left( \lambda _{2},\mathcal{U}_{2}\right) } &=&x_{\lambda _{2}}\in 
\mathrm{St}\left[ x_{\mathcal{U}_{2}},\mathcal{U}_{2}\right] \subset \mathrm{%
St}\left[ x_{\mathcal{V}},\mathcal{V}\right] ,
\end{eqnarray*}%
and hence 
\begin{equation*}
\rho \left( x_{\left( \lambda _{1},\mathcal{E}_{1}\right) },x_{\left(
\lambda _{2},\mathcal{E}_{2}\right) }\right) \prec 1\left( \rho \left(
x_{\left( \lambda _{1},\mathcal{E}_{1}\right) },x_{\mathcal{D}}\right) \cap
\rho \left( x_{\mathcal{D}},x_{\left( \lambda _{2},\mathcal{E}_{2}\right)
}\right) \right) \prec 1\left\{ \mathcal{V}\right\} \prec \left\{ \mathcal{U}%
\right\} \text{.}
\end{equation*}%
Therefore $\left( x_{\left( \lambda ,\mathcal{U}\right) }\right) $ is a
Cauchy subnet of $\left( x_{\lambda }\right) $. By completeness, $\left(
x_{\left( \lambda ,\mathcal{U}\right) }\right) $ converges to some point $%
x\in X$. We claim that $x\in \bigcap\limits_{\lambda }F_{\lambda }$. In
fact, for each $\lambda $ take $\lambda _{0}\geq \lambda $ and $\mathcal{U}%
_{0}\in \mathcal{O}$ such that $\left( \lambda _{0},\mathcal{U}_{0}\right)
\in \Gamma $. The subnet $\left( x_{\left( \lambda ^{\prime },\mathcal{U}%
^{\prime }\right) }\right) _{\left( \lambda ^{\prime },\mathcal{U}^{\prime
}\right) \geq \left( \lambda _{0},\mathcal{U}_{0}\right) }$ also converges
to $x$. As $\lambda _{0}\geq \lambda $, $x_{\left( \lambda ^{\prime },%
\mathcal{U}^{\prime }\right) }=x_{\lambda ^{\prime }}\in F_{\lambda ^{\prime
}}\subset F_{\lambda }$, for all $\left( \lambda ^{\prime },\mathcal{U}%
^{\prime }\right) \geq \left( \lambda _{0},\mathcal{U}_{0}\right) $. Since $%
F_{\lambda }$ is closed, it follows that $x\in F_{\lambda }$. Therefore $%
x\in \bigcap\limits_{\lambda }F_{\lambda }$ as desired. Now, as $\gamma
\left( F_{\lambda }\right) \rightarrow \mathcal{O}$, we have $\gamma \left(
\bigcap\limits_{\lambda }F_{\lambda }\right) =\mathcal{O}$. By Proposition %
\ref{P9}, this means that $\bigcap\limits_{\lambda }F_{\lambda }=\mathrm{cls}%
\left( \bigcap\limits_{\lambda }F_{\lambda }\right) $ is compact.

As to the converse, suppose that every decreasing net $\left( F_{\lambda
}\right) $ of nonempty closed sets of $X$, with $\gamma \left( F_{\lambda
}\right) \rightarrow \mathcal{O}$, has nonempty compact intersection. Let $%
\left( x_{\lambda }\right) _{\lambda \in \Lambda }$ be a Cauchy net. For
each $\lambda $, define the set $H_{\lambda }=\left\{ x_{\lambda ^{\prime
}}:\lambda ^{\prime }\geq \lambda \right\} $. Then $H_{\lambda }$ is a
nonempty subset of $X$ and $H_{\lambda _{1}}\subset H_{\lambda _{2}}$
whenever $\lambda _{1}\geq \lambda _{2}$. We claim that $\mathrm{D}\left(
H_{\lambda }\right) \rightarrow \mathcal{O}$. In fact, for a given $\mathcal{%
U}\in \mathcal{O}$ there is $\lambda _{0}$ such that $\mathcal{U}\in \rho
\left( x_{\lambda _{1}},x_{\lambda _{2}}\right) $ whenever $\lambda
_{1},\lambda _{2}\geq \lambda _{0}$. If $x_{\lambda _{1}},x_{\lambda
_{2}}\in H_{\lambda }$, with $\lambda \geq \lambda _{0}$, we have $\lambda
_{1},\lambda _{2}\geq \lambda \geq \lambda _{0}$ and then $\mathcal{U}\in
\rho \left( x_{\lambda _{1}},x_{\lambda _{2}}\right) $. Hence $\mathcal{U}%
\in \mathrm{D}\left( H_{\lambda }\right) $ whenever $\lambda \geq \lambda
_{0}$, and therefore $\mathrm{D}\left( H_{\lambda }\right) \rightarrow 
\mathcal{O}$. Now, by Proposition \ref{P9}, it follows that $\alpha \left( 
\mathrm{cls}\left( H_{\lambda }\right) \right) \rightarrow \mathcal{O}$.
Since $\mathrm{cls}\left( H_{\lambda _{1}}\right) \subset \mathrm{cls}\left(
H_{\lambda _{2}}\right) $ whenever $\lambda _{1}\geq \lambda _{2}$, we
obtain a decreasing net $\left( \mathrm{cls}\left( H_{\lambda }\right)
\right) $ of nonempty closed sets of $X$ with $\alpha \left( \mathrm{cls}%
\left( H_{\lambda }\right) \right) \rightarrow \mathcal{O}$. By hypothesis,
the intersection $\bigcap\limits_{\lambda }\mathrm{cls}\left( H_{\lambda
}\right) $ is nonempty. Then we can take a point $x\in
\bigcap\limits_{\lambda }\mathrm{cls}\left( H_{\lambda }\right) $. We claim
that $x_{\lambda }\rightarrow x$. We claim that $x_{\lambda }\rightarrow x$.
By Proposition \ref{P10}, it means to prove that $\rho \left( x_{\lambda
},x\right) \rightarrow \mathcal{O}$. For a given $\mathcal{U}\in \mathcal{O}$%
, there is $\lambda _{0}$ such that $\lambda \geq \lambda _{0}$ implies $%
\mathcal{U}\in \mathrm{D}\left( \mathrm{cls}\left( H_{\lambda }\right)
\right) $. As $x_{\lambda },x\in \mathrm{cls}\left( H_{\lambda }\right) $,
it follows that $\mathcal{U}\in \rho \left( x_{\lambda },x\right) $ whenever 
$\lambda \geq \lambda _{0}$. Hence $\rho \left( x_{\lambda },x\right)
\rightarrow \mathcal{O}$. Therefore the Cauchy net $\left( x_{\lambda
}\right) $ is convergent and $X$ is a complete admissible space.
\end{proof}

\section{Conclusion and further applications}

Admissible structure in completely regular space allows the extension of the
classical Cantor--Kuratowski intersection theorem (Theorem \ref{T7}). There
is a special interest in applications to topological dynamics. Historically,
the admissible spaces were provided with the intention of extending Conley's
theorems in dynamical systems. Successfully, a topological space endowed
with an admissible family of open coverings became a fundamental structure
for studies of chain recurrence, attraction, and stability (see e.g. \cite%
{BS}, \cite{BSRocha}, \cite{BSRocha2}, \cite{patrao2}, \cite{So2}, \cite{So3}%
). Furthermore, the admissible structure is currently used in studies of
chaos, entropy, and global attractors for semigroup actions on topological
spaces. An important application of the Cantor--Kuratowski theorem will
appear latter (\cite{Richard2}).


\begin{thebibliography}{99}
\bibitem{Richard} Alves, R. W. M.; Rocha, V. H. L.; and Souza, J. A., A
characterization of completely regular spaces, \emph{Int. J. Math.}, 26
(2015), 1550032.

\bibitem{Richard2} Alves, R. W. M. and Souza, J. A.; Global attractors for
semigroup actions on uniformizable spaces. Submitted manuscript.

\bibitem{BS} Braga Barros, C. J. and Souza J. A., Attractors and chain
recurrence for semigroup actions, \emph{J. Dyn. Diff. Equations}, 22 (2010),
723--740.

\bibitem{BSRocha} Braga Barros, C. J.; Rocha, V. H. L.; and Souza, J. A.,
Lyapunov stability and attraction under equivariant maps, \emph{Canad. J.
Math.,} 67 (2015), 1247--1269.

\bibitem{BSRocha2} Braga Barros, C. J.; Rocha, V. H. L.; and Souza, J. A.,
On attractors and stability for semigroup actions and control systems, \emph{%
Math. Nachr.,} 289 (2016), 272--287.

\bibitem{Kuratowski} Kuratowski, K., \emph{Topologie, vol. I}, Warszawa,
1952.

\bibitem{patrao2} Patr\~{a}o, M. and San Martin, L. A. B., Semiflows on
topological spaces: chain transitivity and semigroups, \emph{J. Dyn. Diff.
Equations}, 19 (2007), 155--180.

\bibitem{So} Souza, J. A., Lebesgue covering lemma on nonmetric spaces, 
\emph{Int. J. Math.}, 24 (2013), 1350018.

\bibitem{So2} Souza, J. A., On limit behavior of semigroup actions on
noncompact spaces, \emph{Proc. Amer. Math. Soc.}, 140 (2011), 3959--3972.

\bibitem{So3} Souza, J. A. e Tozatti, H. V. M., A note on Poincar\'{e}
recurrence for semigroup actions via Furstenberg family, \emph{Semigroup
Forum}, 94 (2017), 181--188.

\bibitem{Will} Willard, S., \emph{General topology}, Dover Publications, New
York, 2004.
\end{thebibliography}
\end{document}